\def\al{\alpha}
\def\be{\beta}
\def\ga{\gamma}
\def\de{\delta}
\def\Om{\Omega}    \def\om{\omega}
\newtheorem{claim}{Claim}
\newtheorem{theorem}{Theorem}
\newtheorem{maintheorem}{Theorem}
\newtheorem{corollary}[theorem]{Corollary}
\newtheorem{definition}[theorem]{Definition}
\newtheorem{lemma}[theorem]{Lemma}
\newtheorem{notation}[theorem]{Notation}
\newtheorem{proposition}[theorem]{Proposition}
\newtheorem{remark}[theorem]{Remark}
\theoremstyle{definition}
\newcommand{\comment}[1]{}
\newdimen\AAdi%
\newbox\AAbo%
\def\AArm{\fam0 }
\def\AAk#1#2{\setbox\AAbo=\hbox{#2}\AAdi=\wd\AAbo\kern#1\AAdi{}}%
\def\AAr#1#2#3{\setbox\AAbo=\hbox{#2}\AAdi=\ht\AAbo\raise#1\AAdi\hbox{#3}}%
\def\BBone{{\AArm 1\AAk{-.8}{I}I}}%
\newcommand {\CC}{{\mathcal C}}
\newcommand {\CM}{{\mathcal M}}
\newcommand{\disp}{\displaystyle}
\newcommand{\eps}{\varepsilon}
\newcommand{\8}{\infty}
\def\m1{{-1}}
\newcommand{\ninf}{{n\rightarrow+\8}}
\newcommand{\ol}{\overline}
\def\S{\Sigma}
\def\s{\sigma}
\def\l{\lambda}
\def\supp{\mbox{supp}\,}
\def\ga{\gamma}
\newcommand{\N}{\mathbb{N}}
\newcommand{\R}{\mathbb{R}}
\newcommand{\Z}{\mathbb{Z}}
\def\Q{{\mathbb Q}}
\title{On the speed of convergence of the pressure function at zero temperature}
\author{R. Leplaideur\thanks{ISEA \& LMBA UMR6205, Université de la Nouvelle Calédonie}  }
\begin{document}

\maketitle

\begin{abstract}
 We prove here that the pressure function cannot converge to the limit entropy at zero temperature faster than some exponential rate. Furthermore, we characterize this limit rate via an expression involving the Peierls barriers between the irreducible components of the Aubry set. This extends and completes results from \cite{Leplaideur-Mengue} and \cite{KuchQuas}. In the first one, an exact exponential speed of convergence was proved, under the assumption that the Aubry set is a subshift of finite type. In the later one, a rate was given but without interpretation in term of Thermodynamical quantities of the system. 
\end{abstract}

Keywords : Ergodic optimization, selection, zero temperature. 

MSC2020: 37D35, 37A60


\section{Introduction}
\subsection{Background and settings}

In this paper we show that the pressure function converges at most exponentially fast to the limit entropy as the temperature goes to 0. 
We remind that, given a dynamical system $(X,T)$ and a potential $A:X\to \R$, the pressure function is defined by 
$$P(\be):=\sup_{\mu\ T-inv}\left\{h_{\mu}(T)+\be\cdot\int A\,d\mu\right\},$$
where $h_{\mu}(T)$ is the Kolmogorov entropy and $\be$ is a real parameter. In statistical mechanics $\be$ represents the inverse of the temperature. 
It is known that $P(\be)$ goes to a non-negative value $h$ as $\be$ goes to $+\8$. This value $h$ is the topological entropy of the Aubry set $\Om$, and also of the Mather set for the potential $A$, $\CM_{a}$. The Mather set is the union of the supports of the \emph{ maximizing measures}, that is, any $T$-invariant probability measure $\mu$ such that 
$$\int A\,d\mu=m(A)=:\sup_{\nu\ T-inv}\int A\,d\nu.$$

In the historical unpublished paper \cite{Conze-Guivarch} it was proved that $P(\be)$ goes to $h$ faster than $1/\be^{2}$. In \cite{KuchQuas}, it is proved that the convergence cannot be faster than exponential, but there is no identification of the exponential rate (in term of quantities linked to the thermodynamics). 
In \cite{Leplaideur-Mengue} it is proved, under the assumption that the Aubry set is a subshift of finite type,  that the convergence  occurs at exponential rate, and the rate is characterized (in term of quantities linked to the thermodynamics). 

Here, we prove that in the general case, the convergence cannot be faster than exponential and, as in \cite{Leplaideur-Mengue}, we show that the minimum rate is linked to thermodynamic quantities. 

\subsection{Settings and results}
In the following  $X\subseteq \{1,...,D\}^{\mathbb{N}}$ is a subshift of finite type given by an aperiodic matrix. 
A point in $X$ will be referred to as an infinite sequence or an infinite words $x=x_{0}x_{1}\ldots$, where the  $x_{i}$'s are the \emph{digits} (in $ \{1,\ldots, D\}$). 
A cylinder $[\om_{0}\ldots\om_{n}]$ is the set of words $x=x_{0}x_{1}\ldots$ such that $x_{i}=\om_{i}$ for $i\le n$. It will also be denoted as a $(n+1)$-cylinder.

$X$ equipped with the metric $d$ defined by 
$$d(x_{0}x_1x_2x_3\ldots,y_{0}y_1y_2y_3\ldots) = 2^{-\min\{i\,|\,x_i\neq y_i\}}$$
is a compact space. 
The shift map $\sigma:X \to X$ is defined by
$$x_{0}x_1x_2x_3\ldots = x_{1}x_2x_3x_4\ldots.$$

We consider $A:X\to \R$, Lipschitz continuous. The set of $\s$-invariant probabilities is denoted by $\CM(\s)$. We set 
$$m(A):=\max_{\mu\in\CM(\s)}\int A\,d\mu.$$
Any measure realizing this supremum is called $A$-maximizing or maximizing for $A$. 
We denote by $\CM_{max}(A)$ the set of $A$-maximizing measures. We remind (see \cite{BLL} and below) that we always may assume that $A$ is  non-positive and $m(A)=0$. From now on, we assume that this holds. 

We remind that $S_{n}(A)$ stands for the Birkhoff sum $A+\ldots +A\circ \s^{n-1}$.

\begin{definition}
\label{def-maneaubrymather}
The \textbf{Ma\~n\'e potential} associated to $A$ is defined by 
	$$S(x,y):=\lim_{\epsilon \rightarrow 0}\left[\sup\{S_n(A)(z),\ \s^{n}(z)=y,\ d(x,z)<\eps\}\right],$$
the \textbf{Aubry set} of $A$ is defined by
	$$\Omega:=\{x\in X\, |\, S(x,x) = 0\}$$ 
\end{definition}

Note that $A\le 0$ yields that  $S(.,.)$ is non-positive.

We remind that the Mather set $\CM_{a}$ is included into the Aubry set $\Om$. 
We emphasize that the Mather set is never empty, since $\mu\mapsto \int A\,d\mu$ is continuous over the compact set $\CM(\s)$. Hence the Aubry set is neither non-empty.

We remind (see also Lemma \ref{lemaManebon}):
\begin{itemize}
\item  $S(x,y)+S(y,z)\le S(x,z)$ for every $x$, $y$, $z$ in $X$;
\item for any fixed $x\in X$, the map $y\mapsto S(x,y)$ is Lipschitz continuous;
\item $x\mapsto S(x,y)$ is upper semi-continuous. 
\end{itemize}

The next definition aims to define irreducible components of the Aubry set. The proof of the proposition part is done later (see Lemma \ref{lem-equivrel}).

\begin{definition}[{\bf and Proposition}]
\label{def-irreduaubry}
The relation $S(x,y)+S(y,x)=0$ defines an equivalence relation. Each equivalence class is called an irreducible component of the Aubry set. 
\end{definition}

%

It is proved below (see Lemma \ref{lem-irreclosed}) that irreducible components are closed sets. Therefore, they are compact sets and two different irreducible components are at positive distance one from the other.

\begin{definition}
\label{def-costs2}
Let $\Om_{i}$ and $\Om_{j}$ be irreducible components (possibly $i=j$). Then we set 
$$S^{ext}(j,i):=\inf_{x\in \Om_{i}}\sup\{S(y,z)+A(z),\s(z)=x,\ z\notin \Om_{i},\ y\in \Om_{j}\}.$$
\end{definition}


We claim (see Lemma \ref{lem-finitcosts3}) that each $S^{ext}(j,i)$ is a non-positive real number. Furthermore, Lemma \ref{lemaManebon} will show that the $\sup$ can be replaced by  a $\max$.

%
%
%
%
If $\Om_{i_{1}},\ldots \Om_{i_{n}}$ are (different) irreducible components, then we denote by $\l(i_{1},\ldots, i_{n})$ the unique eigenvalue (see \cite{BCOQ}) for the $n\times n$ matrix with entries $S^{ext}(j,i)$ and $i,j=i_{1},\ldots, i_{n}$ within the {\bf max-plus formalism}.

We remind that for the $n\times n$ matrix with entries $a_{ij}>-\8$, the eigenvalue $\l$  within the max-plus formalism is the maximal mean value of the entries along the cycles: 
$$\l=\max_{i_{1},i_{2},\ldots ,i_{k} \text{ all different}}\dfrac{a_{i_{1}i_{2}}+a_{i_{2}i_{3}}+\ldots +a_{i_{k-1}i_{k}}+a_{i_{k}i_{1}}}k.$$

We remind that the pressure function $\be\mapsto P(\be)$ is convex\footnote{this immediately follows from definition.} with $P'(\be)=\int A\,d\mu_{\be}$, where $\mu_{\be}$ is the unique equilibrium state for $\be.A$. Therefore, $P(\be)$ is non-increasing (in $\be$) and is bounded from below by $h_{\mu}$ where $\mu$ is any maximizing measure. A classical argument shows that $P(\be)$ converges to $h$, which is the maximal entropy among maximizing measures. 

The main question we are interested in is at which speed does the convergence occur. 

\begin{maintheorem}\label{theoA}
We get a minimal exponential rate for the convergence of the pressure:
$$\hskip-3cm\liminf_{\be\to+\8}\dfrac1\be\log(P(\be)-h)\geq \sup\{\l(i_{1},\ldots, i_{n}),\ n\in \N^{*},\  \Om_{i_{k}}\text{ are irreducible components with maximal entropy}\}.$$
\end{maintheorem}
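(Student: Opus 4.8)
The plan is to bound the pressure $P(\be)$ from below by the pressure of a cleverly chosen sub-system that mimics a given cycle $i_{1},\ldots,i_{n}$ of irreducible components of maximal entropy, and then to let $\be\to+\infty$. Fix such a cycle, and for notational convenience write $\Om_{1},\ldots,\Om_{n}$ for the components involved, all with entropy $h$. The key idea is that by Definition \ref{def-costs2} and the remark following it, for each ordered pair $(j,i)$ there is a point $x_{i}\in\Om_{i}$ and a word $z^{(j,i)}\notin\Om_{i}$ with $\s(z^{(j,i)})=x_{i}$, together with a point $y_{j}\in\Om_{j}$, realizing $S(y_{j},z^{(j,i)})+A(z^{(j,i)})$ arbitrarily close to $S^{ext}(j,i)$. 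Since $S(y_{j},z^{(j,i)})$ is itself (up to $\eps$) a Birkhoff sum $S_{m}(A)(w)$ along an orbit piece from near $y_{j}$ landing on $z^{(j,i)}$, concatenating such connecting words with long orbit segments that stay in the respective components $\Om_{i_{k}}$ produces, for every $N$, an exponentially large ($\approx e^{Nh}$) family of long periodic-like orbit segments whose Birkhoff averages of $A$ are $\geq \l(i_{1},\ldots,i_{n}) - o(1)$ per unit length spent in the connecting words, and $\geq -o(1)$ per unit length spent inside the components.

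Second, I would turn this combinatorial family into a measure-theoretic estimate. One clean route is to build, for each large $\be$, an explicit invariant (or nearly invariant) probability measure $\nu_{\be}$ supported near the union of the chosen components and the finitely many connecting orbit pieces: take the measures of maximal entropy on each $\Om_{i_{k}}$ (which have entropy $h$), weight them so that the measure spends a fraction $1-\delta_{\be}$ of its time in the components and a fraction $\delta_{\be}$ traversing the connecting words arranged along the optimal cycle, and optimize $\delta_{\be}$. Such a measure satisfies $h_{\nu_{\be}}(\s)\geq (1-\delta_{\be})h - \eta(\delta_{\be})$, where $\eta(\delta)\to 0$ as $\delta\to 0$ (the entropy lost by the occasional deterministic excursions), and $\int A\,d\nu_{\be}\geq \delta_{\be}\bigl(\l(i_{1},\ldots,i_{n})-\eps\bigr) - o(\delta_{\be})$, because inside the components $A$ is summably close to $0$ along Aubry orbits while along the cycle of connecting words the total $A$-cost per period is $\approx \l(i_{1},\ldots,i_{n})$ times the period length. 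Plugging into the variational definition of pressure,
$$P(\be)\ \geq\ h_{\nu_{\be}}(\s) + \be\int A\,d\nu_{\be}\ \geq\ h - \eta(\delta_{\be}) + \be\,\delta_{\be}\bigl(\l(i_{1},\ldots,i_{n})-\eps\bigr) - o(\be\delta_{\be}).$$
Hence $P(\be)-h \geq \be\delta_{\be}(\l-\eps) - \eta(\delta_{\be}) - o(\be\delta_{\be})$; choosing $\delta_{\be}=c/\be$ for a small constant $c$ gives $P(\be)-h \geq c(\l-\eps) - \eta(c/\be)$, which is the wrong sign unless one is more careful — so instead one keeps $\delta_{\be}$ fixed small, uses $P(\be)\geq h + \be\delta(\l-\eps) - \eta(\delta)$ only to see that $P(\be)-h$ stays bounded below, and then extracts the exponential rate by noting $P(\be)-h \geq e^{\be(\l-\eps)}\cdot(\text{const})$ from the genuinely exponential count $e^{Nh}$ of orbit segments of a fixed excursion-structure: more precisely $P(\be)-h$ is bounded below by the leading eigenvalue contribution of a transfer operator restricted to this sub-system, which behaves like $e^{\be \l(i_{1},\ldots,i_{n})}$ up to subexponential factors. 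Taking $\frac1\be\log$ and then $\liminf_{\be\to+\infty}$ yields $\liminf \frac1\be\log(P(\be)-h)\geq \l(i_{1},\ldots,i_{n})-\eps$; letting $\eps\to0$ and then taking the supremum over all admissible cycles gives the claimed bound.

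The main obstacle I anticipate is the passage, done carefully, from the discrete cost structure $S^{ext}(j,i)$ and the max-plus eigenvalue $\l(i_{1},\ldots,i_{n})$ to a lower bound on $P(\be)-h$ with the correct exponential exponent and with provably controlled (subexponential) error terms. Two technical points need care: first, that the Ma\~n\'e potential values $S(y_{j},z^{(j,i)})$ really are realized (up to $\eps$) by genuine finite orbit segments of the shift landing on the prescribed connecting word — this is where Lemma \ref{lemaManebon} (the $\sup$ being attained) and the Lipschitz/upper-semicontinuity properties of $S$ are essential, and one must check that gluing such a segment to a high-entropy block inside a component is admissible for the SFT transition matrix (possibly after inserting a bounded number of extra symbols, whose bounded $A$-cost and bounded entropy loss are harmless); second, that the near-zero value of $S_{k}(A)$ along long orbit pieces inside an irreducible component $\Om_{i}$ can be made uniform, so that the time spent in the components contributes $o(1)$ rather than a negative drift that would degrade the rate — this follows from $S(x,x)=0$ on $\Om$ together with a compactness/uniform-continuity argument, but it must be stated precisely. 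Everything else — convexity of $P$, the variational principle, existence of measures of maximal entropy on the (compact, and in fact sofic/SFT-like, by Lemma \ref{lem-irreclosed}) components — is standard or already available in the excerpt.
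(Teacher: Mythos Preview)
Your proposal has a genuine gap at precisely the point you yourself flag. The variational principle with a test measure gives lower bounds on $P(\be)$, not on $P(\be)-h$. Since $\l(i_{1},\ldots,i_{n})\le 0$ (and in fact $<0$ by Lemma~\ref{lem-siinul}), the inequality you derive, $P(\be)-h\ge -\eta(\delta)+\be\delta(\l-\eps)$, has a negative right-hand side and is vacuous. Your attempted rescue --- ``$P(\be)-h$ is bounded below by the leading eigenvalue contribution of a transfer operator restricted to this sub-system, which behaves like $e^{\be\l}$'' --- is exactly the statement to be proved, and you give no mechanism for it. Building a sub-SFT $Y$ and invoking $P(\be)\ge P_{Y}(\be)$ does not help either: to extract $P_{Y}(\be)-h\ge Ce^{\be\l}$ you would need the components $\Om_{i_{k}}$ themselves to sit inside $Y$ as irreducible pieces of entropy exactly $h$, but the $\Om_{i_{k}}$ are \emph{not} SFTs in general (Lemma~\ref{lem-irreclosed} only says they are closed; the Sturmian example in the paper shows they can be far from sofic), so your parenthetical ``sofic/SFT-like'' is unjustified, and any genuine SFT neighbourhood of $\Om_{i_{k}}$ has entropy $h_{i,\eps}\ge h$ that must be controlled separately.

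The paper avoids all of this by working not with test measures but with the eigenfunction equation $e^{N_{0}P(\be)}H_{\be A}=\CL_{\be A}^{N_{0}}H_{\be A}$ on the full shift. One integrates this identity against a conformal measure $\nu_{\eps}$ for the zero potential on an SFT neighbourhood $\S_{i,\eps}$ of $\Om_{i}$; the preimages lying in $\S_{i,\eps}$ contribute $e^{N_{0}h_{i,\eps}}\int H_{\be A}\,d\nu_{\eps}$ (up to a factor $e^{\pm\be N_{0}r(\eps)}$), while the preimages outside contribute a strictly positive remainder. After letting $\eps\to 0$ and rearranging, one gets $\bigl(e^{N_{0}(P(\be)-h)}-1\bigr)\cdot(\text{bounded})\ge (\text{external sum})$, and it is this algebraic identity, not a variational lower bound, that isolates $P(\be)-h$ on the left. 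Taking $\frac1\be\log$ and using $\frac1\be\log H_{\be A}\to V$ (a calibrated subaction) together with $V(y)\ge V(\Om_{j})+S(\Om_{j},y)$ turns the right-hand side into $S^{ext}(j,i)+V(\Om_{j})$ and the left into $\ga+V(\Om_{i})$, where $\ga$ is any subsequential limit of $\frac1\be\log(P(\be)-h)$. Summing the resulting inequalities $\ga+V(\Om_{i_{k}})\ge S^{ext}(i_{k+1},i_{k})+V(\Om_{i_{k+1}})$ around the cycle cancels the subaction values and yields $\ga\ge \l(i_{1},\ldots,i_{n})$. The calibrated subaction $V$ is the missing ingredient in your sketch: it is what allows one to compare contributions at different components without ever needing entropy control on the $\Om_{i_{k}}$ themselves.
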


\subsection{On the structure of the Aubry set}
Subshifts of finite are perfectly well understood. We refer to \cite[Ex. 1.9.4-]{katok-hasselblatt} for their decompositions in irreducible components. 
This has been used in \cite{Leplaideur-Mengue}. 

We emphasize here at least two important points where the assumption ``the Aubry set is a subshift of finite type'' is relevant:
\begin{enumerate}
\item Equality or inequality on the speed of convergence. Equality comes from the existence of conformal measures associated to the measure of maximal entropy in each irreducible component with maximal entropy. On the other hand, for general shift, there is no general theory. This explains why we only get\footnote{at least for the moment.} inequalities (see below) in Theorem \ref{theoA}. 
\item Influence of the components with minor entropy. In the case $h>0$, it is proved\footnote{still under the assumption that the Aubry set is a subshift of finite type.} in \cite{Leplaideur-Mengue} that components of the Aubry with entropy strictly less than $h$ do not play any role in the speed of convergence. The proof deeply uses the fact that in the case the Aubry set is a subshift of finite type, it only has finitely irreducible components. In the case $\Om$ is a general subshift, it may have uncountably many irreducible components. We provide an example below. 
\end{enumerate}

We warmly thank Sebastien Ferenczy for having given us the references we needed to construct the example. 

We consider some small closed interval $[a,b]$ and then consider the union of Sturmian shifts with angle $\al\in [a,b]$. We refer to \cite{Morse-Hedlund} for a general description of these Sturmian trajectories. 

More precisely, $\Om_{\al}$ is the subshift whose language correspond to all the words that appear, considering a line passing to $(0,0)$ in $\R^{2}$, with slope $\al$, and writing 0 when the line crosses an horizontal line in the net defined by $\Z^{2}$, and 1 if it crosses a vertical line. 

It is known that each $\Om_{\al}$ is uniquely ergodic which implies that $\Om:=\disp\bigcup_{\al\in[a,b]}\Om_{\al}$ has uncountably many irreducible components, some of them being periodic orbit (iff $\al\in\Q$). 

It is known (see \cite[Corollary 18]{Mignosi}) that the number $A_{n}$ of all the words of length $n$ appearing in some Sturmian subshift is equivalent to $\dfrac{n^{3}}{\pi^{2}}$ as $n$ goes to $+\8$. This immediately yields that $\Om$ has zero entropy. 

Now consider $A:=-d(.,\Om)$. Then $\Om$ is the Aubry set for $A$ and it has infinitely many irreducible components.

\subsection{Plan of the paper}
In Section \ref{sec-tech} we give technical results. In particular we remind basic results on the calibrated subactions and the Ma\~né Potential. 
In Section \ref{sec-proof} we do the proof of the Theorem \ref{theoA}. The main point is Proposition \ref{prop-minoprincipale} where we prove a key inequality.

\section{Technical results}\label{sec-tech}

\subsection{On the Ma\~né potential}

\subsubsection{Technics to compute/estimate it}

For $\eps>0$ we set 
$$S^{\eps}(x,y):=\sup{\{S_{k}(A)(z),\ \s^{k}(z)=y,\ d(x,z)<\eps\}}.$$
Then $S(x,y)=\lim_{\eps\to0}S^{\eps}(x,y)$. 

\begin{lemma}
\label{lem-techmane}
Let $x$ and $y$ be in $X$. Let $\ol\xi_{n}$ be points such that $\ol\xi_{n}\to x$ as $n$ goes to $+\8$, and for any $n$ there exists $k_{n}$ with $k_{n}\to+\8$ as $n\to+\8$ such that $\s^{k_{n}}(\ol\xi_{n})=y$. Then, 
$$S(x,y)\ge\limsup_{n\to+\8}S_{k_{n}}(A)(\ol\xi_{n}).$$
\end{lemma}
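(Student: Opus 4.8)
The plan is to extract the inequality directly from the definitions of $S^\eps$ and $S$ by a diagonal argument. Fix $x$, $y$ in $X$ and the sequences $\ol\xi_n \to x$, $k_n \to +\8$ with $\s^{k_n}(\ol\xi_n) = y$. The key observation is that for \emph{every} fixed $\eps > 0$, eventually $d(x,\ol\xi_n) < \eps$, so for all large $n$ the point $\ol\xi_n$ is an admissible competitor in the supremum defining $S^\eps(x,y)$: indeed $\s^{k_n}(\ol\xi_n) = y$ and $d(x,\ol\xi_n) < \eps$, whence $S^\eps(x,y) \ge S_{k_n}(A)(\ol\xi_n)$.

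From there I would pass to the $\limsup$ in $n$: for each fixed $\eps > 0$,
$$S^\eps(x,y) \ge \limsup_{n\to+\8} S_{k_n}(A)(\ol\xi_n),$$
since the left-hand side does not depend on $n$ and the displayed inequality holds for all sufficiently large $n$. Now let $\eps \to 0$. By the stated fact that $S(x,y) = \lim_{\eps\to 0} S^\eps(x,y)$, the left-hand side converges to $S(x,y)$, while the right-hand side is a constant independent of $\eps$, giving
$$S(x,y) \ge \limsup_{n\to+\8} S_{k_n}(A)(\ol\xi_n),$$
which is exactly the claim.

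There is essentially no obstacle here; the only point requiring a word of care is the role of the hypothesis $k_n \to +\8$. It is not needed to make $\ol\xi_n$ an admissible competitor for $S^\eps$ (any finite $k_n$ with $\s^{k_n}(\ol\xi_n) = y$ already works), but it is implicitly what makes the statement non-vacuous and consistent with the definition of $S$ as a genuine Ma\~n\'e potential rather than a finite-time quantity; I would remark on this but not belabor it. One should also note that the $\limsup$ on the right could be $-\8$, in which case the inequality is trivially true, so no integrability or finiteness assumption is needed. The whole argument is two lines once the admissibility observation is made.
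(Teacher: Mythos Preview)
Your proof is correct and follows exactly the same route as the paper's: for each fixed $\eps>0$, eventually $\ol\xi_n$ is admissible in the supremum defining $S^\eps(x,y)$, hence $S^\eps(x,y)\ge\limsup_n S_{k_n}(A)(\ol\xi_n)$, and then one lets $\eps\downarrow 0$. Your side remarks on the role of $k_n\to+\8$ and the possibility that the $\limsup$ equals $-\8$ are accurate but not present in the paper's terser version.
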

\begin{proof}
Pick $\eps>0$. Let $N$ be such that for every $n>N$, $d(x,\ol\xi_{n})<\eps$. Then 
$$S^{\eps}(x,y):=\sup{\{S_{k}(A)(z),\ \s^{k}(z)=y,\ d(x,z)<\eps\}}\ge S_{k_{n}}(A)(\ol\xi_{n})$$
holds as soon as $n>N$ holds.
This yields, for every $\eps$, $\disp S^{\eps}(x,y)\ge\limsup_{n}S_{k_{n}}(A)(\ol\xi_{n})$. Hence doing $\eps\downarrow 0$ we get the result. 
\end{proof}

%
%

\subsubsection{Mains properties and one immediate consequence}

Let us first recall a classical result for the Ma\~né potential:
	
	\begin{lemma}[See \cite{Leplaideur-Mengue}] \label{lemaManebon} Let $S$ be the Ma\~{n}\'{e} potential. We have:
	\begin{enumerate}
\item  $S(x,y)+S(y,z)\le S(x,z)$ for every $x$, $y$, $z$ in $X$;
\item for any fixed $x\in X$, the map $y\mapsto S(x,y)$ is Lipschitz continuous;
\item if $x\sim y$, then $S(x,.)=S(y,.)$;
\item if $\s(x)=y$, then $S(x,y)=A(x)$.
\end{enumerate}
\end{lemma}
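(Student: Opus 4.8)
\textbf{Proof plan for Lemma \ref{lemaManebon}.} The four assertions are all classical, and I would prove them in the order listed, each reducing to a manipulation of Birkhoff sums along orbit segments together with the limiting definition $S(x,y)=\lim_{\eps\to0}S^{\eps}(x,y)$.

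\emph{Item (1), the triangle-type inequality.} Fix $x,y,z$ and $\eps>0$. Take any $z'$ with $\s^{k}(z')=z$, $d(y,z')<\eps$, and any $y'$ with $\s^{m}(y')=y$, $d(x,y')<\eps$. The point is to concatenate the two orbit segments: let $w$ be the word obtained by prepending the first $m$ digits of $y'$ to $z'$, so that $\s^{m}(w)=z'$ and $d(x,w)<\eps$ (since $w$ agrees with $y'$ on the first $m$ coordinates, hence well inside the $\eps$-ball once $\eps$ is small). Then $\s^{m+k}(w)=z$ and $S_{m+k}(A)(w)=S_{m}(A)(y')+S_{k}(A)(z')$. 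Taking suprema gives $S^{\eps}(x,z)\ge S^{\eps'}(x,y)+S^{\eps}(y,z)$ for suitable $\eps,\eps'$, and letting $\eps\downarrow 0$ yields (1). One must be slightly careful that admissibility of $w$ in the subshift of finite type is preserved by the concatenation; this is where the transition-matrix structure is used, choosing the approximating points so that the last digit of $y'$ is compatible with the first digit of $z'$ — which can always be arranged because $d(y,z')<\eps$ forces the first digits of $z'$ to match those of $y$, and the orbit of $y'$ lands near $y$.

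\emph{Items (2) and (3).} For (2), fix $x$ and let $y,y'$ be close, say $d(y,y')=2^{-N}$. Given any near-optimal $z$ for $S^{\eps}(x,y)$ with $\s^{k}(z)=y$, replace the tail of $z$ beyond coordinate $k$ by a tail realizing $y'$ instead of $y$; since $y$ and $y'$ agree up to coordinate $N$, the modified point $z'$ differs from $z$ only far out (beyond coordinate $k+N$), and Lipschitz continuity of $A$ plus the geometric decay of $d(\s^{j}z,\s^{j}z')$ bounds $|S_{k}(A)(z)-S_{k}(A)(z')|$ by a constant times $2^{-N}$, uniformly in $k$. This gives $|S(x,y)-S(x,y')|\le C\,d(y,y')$. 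Item (3) is then immediate from (1): if $x\sim y$ then $S(x,y)+S(y,x)=0$, so for any $w$, $S(x,w)\ge S(x,y)+S(y,w)$ and $S(y,w)\ge S(y,x)+S(x,w)$; adding $S(x,y)+S(y,x)=0$ forces $S(x,w)=S(y,w)$.

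\emph{Item (4).} If $\s(x)=y$, then taking $z=x$ itself (with $k=1$) is admissible for every $\eps$ and gives $S^{\eps}(x,y)\ge S_{1}(A)(x)=A(x)$, hence $S(x,y)\ge A(x)$. For the reverse inequality, any $z$ with $d(x,z)<\eps$ and $\s^{k}(z)=y$ satisfies, for $\eps$ small, $z_0=x_0$; writing $S_{k}(A)(z)=A(z)+S_{k-1}(A)(\s z)$ and noting $\s z$ is a point whose image under $\s^{k-1}$ is $y$ with $\s z$ close to $y=\s x$, one bounds $S_{k-1}(A)(\s z)$ above and controls $A(z)-A(x)$ by Lipschitzness; passing to the limit and using $A\le 0$ together with $S(y,y)\le 0$ (which holds since $y\in\Omega$ or, more generally, since $S(\cdot,\cdot)\le 0$) pins $S(x,y)\le A(x)$. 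I expect \textbf{item (1)} — specifically, the bookkeeping needed to guarantee that concatenated orbit segments remain admissible words of the subshift and stay inside the shrinking $\eps$-neighbourhoods simultaneously — to be the only genuinely delicate point; the rest is routine once (1) is in hand. Since the statement is quoted from \cite{Leplaideur-Mengue}, a short proof or a pointer there suffices.
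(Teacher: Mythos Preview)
The paper supplies no proof of this lemma; it is stated with the attribution ``See \cite{Leplaideur-Mengue}'' and nothing further, so there is no in-paper argument against which to compare yours. Your sketch follows the standard route and is correct. Two minor remarks: in item~(4) the upper bound is more direct than you suggest --- for any admissible $z$ with $d(x,z)<\eps$ and $\s^{k}(z)=y$ one has $S_{k}(A)(z)=A(z)+S_{k-1}(A)(\s z)\le A(z)\le A(x)+\mathrm{Lip}(A)\cdot\eps$ using only $A\le 0$, so the detour through $S(y,y)$ is unnecessary; and in item~(1) your own caveat about admissibility of the concatenated word and the Lipschitz control of $|S_{m}(A)(w)-S_{m}(A)(y')|$ is exactly where the (routine) work lies, and it goes through once one notes that $w$ agrees with $y'$ on the first $m$ digits while $\s^{m}(w)=z'$ agrees with $y=\s^{m}(y')$ to depth roughly $\log_{2}(1/\eps)$.
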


%
%
%
%
%

\begin{lemma}
\label{lem-equivrel}
$S(x,y)+S(y,x)=0$ defines an equivalence relation. 
\end{lemma}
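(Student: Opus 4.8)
The goal is to show that $x \sim y$, defined by $S(x,y)+S(y,x)=0$, is an equivalence relation on $X$. The relation is clearly symmetric by construction, so the work is in reflexivity and transitivity.

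First I would handle reflexivity. For any $x \in X$ I need $S(x,x)+S(x,x)=0$, i.e. $S(x,x)=0$; but a priori the relation should only be reflexive on the domain where it makes sense. Here the natural reading (consistent with Definition \ref{def-irreduaubry} referring to ``irreducible components of the Aubry set'') is that $\sim$ is considered as a relation \emph{on the Aubry set $\Omega$}, where by definition $S(x,x)=0$; then reflexivity on $\Omega$ is immediate. (If one insists on $\sim$ being reflexive on all of $X$, one first notes that $S(x,x)\le 0$ always, since $A\le 0$ makes $S$ non-positive by the remark after Definition \ref{def-maneaubrymather}, and $x\sim y$ forces $S(x,x)\ge S(x,y)+S(y,x)=0$ by the triangle-type inequality of Lemma \ref{lemaManebon}(1), so in fact $x\sim y \Rightarrow x,y\in\Omega$, and $\sim$ is an equivalence relation on $\Omega$.)

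The heart of the matter is transitivity. Suppose $x\sim y$ and $y\sim z$, so $S(x,y)+S(y,x)=0$ and $S(y,z)+S(z,y)=0$. I want $S(x,z)+S(z,x)=0$. The upper bound $S(x,z)+S(z,x)\le 0$ is automatic from non-positivity of $S$ (equivalently from Lemma \ref{lemaManebon}(1) applied as $S(x,z)+S(z,x)\le S(x,x)\le 0$). For the reverse inequality I use the subadditivity of Lemma \ref{lemaManebon}(1) twice: $S(x,z)\ge S(x,y)+S(y,z)$ and $S(z,x)\ge S(z,y)+S(y,x)$. Adding these,
$$S(x,z)+S(z,x)\ \ge\ \big(S(x,y)+S(y,x)\big)+\big(S(y,z)+S(z,y)\big)\ =\ 0+0\ =\ 0.$$
Combining with the upper bound gives $S(x,z)+S(z,x)=0$, i.e. $x\sim z$.

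The main (only) obstacle is really just the bookkeeping about the domain: making sure that ``reflexive'' is interpreted on the correct set, since $S(x,x)=0$ fails for general $x\in X$ — this is exactly why the irreducible components are components \emph{of the Aubry set} and not of all of $X$. Once that is settled, transitivity is a two-line application of the triangle inequality from Lemma \ref{lemaManebon}(1), and symmetry is free.
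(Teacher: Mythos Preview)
Your proof is correct and follows essentially the same route as the paper's: symmetry is trivial, reflexivity is immediate from the definition of $\Omega$, and transitivity is obtained by combining the non-positivity of $S$ with two applications of the triangle-type inequality from Lemma~\ref{lemaManebon}(1). Your additional remark that $x\sim y$ forces $x,y\in\Omega$ is a helpful clarification of the domain issue, but otherwise the argument is the same.
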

\begin{proof}
By definition $x\in \Om$ if and only if $S(x,x)=0$, which yields $x\sim x$. 
Furthermore 
$$S(x,y)+S(y,x)=S(y,x)+S(x,y),$$
hence $x\sim y$ yields $y\sim x$. 
Finally, if $x\sim y$ and $y\sim z$ hold, 
\begin{eqnarray*}
0\ge S(x,z)+S(z,x)&\ge& S(x,y)+S(y,z)+S(z,y)+S(y,x)\\
&=&S(x,y)+S(y,x)+S(y,z)+S(z,y)=0+0+0+0=0.
\end{eqnarray*}

Hence, $x\sim z$ holds.
\end{proof}

We remind that irreducible components of $\Om$ are the classes of equivalences in $\Om$. 

\begin{notation}
 For $x$ in $\Om_{i}$ we shall set $S(\Om_{i},\cdot)$ for $S(x,\cdot)$. 
\end{notation}
An immediate consequence is that in the definition of $S^{ext}(j,i)$ we can replace $\sup$ by $\max$, since any $x$  admits finitely preimages (of order 1) and $S(y,z)$ does not depend on the choice of $y\in \Om_{j}$.

\subsubsection{Irreducible components and costs}

\begin{lemma}
\label{lem-AonOm}
The potential $A$ is equal to 0 on $\Om$. 
\end{lemma}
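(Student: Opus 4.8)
The plan is to use the characterization of $\Omega$ via $S(x,x) = 0$ together with the basic properties of the Mañé potential from Lemma \ref{lemaManebon}. Fix $x \in \Omega$, so $S(x,x) = 0$. Set $y := \sigma(x)$. By property (4) of Lemma \ref{lemaManebon} (with $\sigma(x) = y$), we have $S(x,y) = A(x)$. The idea is then to combine this with the subadditivity (property (1)) and a bound showing $S(y,x) \geq -A(x)$, which would force $S(x,x) \geq S(x,y) + S(y,x) \geq A(x) - A(x) = 0$; but since $A \leq 0$ and $S \leq 0$, the real content is the reverse — we want to show $A(x) \geq 0$, hence $A(x) = 0$.

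Here is the mechanism I would pursue. Since $x \in \Omega$, by definition $S(x,x) = 0$, meaning there exist points $z_n \to x$ with $\sigma^{k_n}(z_n) = x$ and $S_{k_n}(A)(z_n) \to 0$ (with $k_n \geq 1$; one should check $k_n$ can be taken $\to +\infty$, or handle bounded $k_n$ separately using continuity). Now I would relate this to $A(x)$ directly: write $S_{k_n}(A)(z_n) = A(z_n) + S_{k_n - 1}(A)(\sigma(z_n))$, where $\sigma(z_n) \to \sigma(x) = y$ and $\sigma^{k_n - 1}(\sigma(z_n)) = x$. By Lemma \ref{lem-techmane} applied with the sequence $\sigma(z_n) \to y$ and return times $k_n - 1 \to +\infty$, we get $S(y,x) \geq \limsup_n S_{k_n-1}(A)(\sigma(z_n)) = \limsup_n \big(S_{k_n}(A)(z_n) - A(z_n)\big) = 0 - A(x) = -A(x)$, using continuity of $A$. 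Combined with $S(x,y) = A(x)$ from Lemma \ref{lemaManebon}(4) and subadditivity $0 = S(x,x) \geq S(x,y) + S(y,x) \geq A(x) + (-A(x)) = 0$, all inequalities are equalities; in particular $S(y,x) = -A(x)$. But $S$ is non-positive (noted after Definition \ref{def-maneaubrymather}), so $-A(x) = S(y,x) \leq 0$, giving $A(x) \geq 0$. Since $A \leq 0$ by assumption, $A(x) = 0$.

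The step I expect to be the main obstacle is the application of Lemma \ref{lem-techmane}, specifically ensuring the return times $k_n - 1$ go to $+\infty$: if $x$ is a fixed point of $\sigma$ one could have $k_n = 1$ for all $n$, in which case $S_{k_n-1}(A) = S_0(A) = 0$ trivially and the argument degenerates — but then $S(x,y) = S(x,\sigma(x)) = S(x,x) = 0 = A(x)$ directly handles it. More generally, if the $k_n$ stay bounded one passes to a subsequence with constant $k_n = k$, and then $z_n \to x$ with $\sigma^k(z_n) = x$ forces (by continuity and the fact that $x$ has finitely many preimages of each order) that $\sigma^k(x) = x$ and $S_k(A)(x) = 0$, so $x$ is periodic with zero Birkhoff sum along its orbit; since each $A(\sigma^i x) \leq 0$ and they sum to zero, each is zero, in particular $A(x) = 0$. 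So the proof splits into the "genuinely unbounded return time" case (handled by Lemma \ref{lem-techmane} as above) and the "periodic" case (handled by elementary summation), and I would organize it that way.
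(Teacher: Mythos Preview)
Your argument is correct, but it takes a substantially longer route than the paper's. The paper's proof is a single line: since $A\le 0$, every Birkhoff sum satisfies $S_n(A)(z)=A(z)+\sum_{k=1}^{n-1}A(\sigma^k z)\le A(z)$, so for any $\eps>0$ one has $S^\eps(x,x)\le \sup_{d(z,x)<\eps}A(z)$; letting $\eps\to 0$ and using continuity of $A$ gives $S(x,x)\le A(x)$. For $x\in\Omega$ this reads $0=S(x,x)\le A(x)\le 0$, hence $A(x)=0$.

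Your approach instead passes through $y=\sigma(x)$, invokes Lemma~\ref{lemaManebon}(4) for $S(x,y)=A(x)$, uses Lemma~\ref{lem-techmane} to bound $S(y,x)\ge -A(x)$ from the approximating orbits, and then closes with subadditivity and the non-positivity of $S$. This is valid and the case split (unbounded return times versus periodic) is handled correctly, but none of that machinery is needed: the inequality $S_n(A)(z)\le A(z)$, which follows immediately from $A\le 0$, already gives the conclusion in one stroke. The paper's argument has the advantage of not requiring Lemma~\ref{lemaManebon}(4) or Lemma~\ref{lem-techmane} at all, and avoids any case analysis on the return times.
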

\begin{proof}
By definition $x$ belongs to $\Om$ if and only if $S(x,x)=0$. Hence we get 
$$ 0\ge A(x)\geq S(x,x)=0.$$
\end{proof}

\begin{lemma}
\label{lem-siinul}
If $\Om_{i}$ is an irreducible component, then $S^{ext}(i,i)<0$.
\end{lemma}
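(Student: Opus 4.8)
The plan is to unwind the definition of $S^{ext}(i,i)$ and argue by contradiction, using the fact that $\Om_i$ is an irreducible component and hence at positive distance from the rest of $\Om$. Recall that
$$S^{ext}(i,i) = \inf_{x\in\Om_i}\max\{S(y,z)+A(z) : \s(z)=x,\ z\notin\Om_i,\ y\in\Om_i\},$$
so we already know this quantity is a well-defined non-positive real number (Lemma \ref{lem-finitcosts3}). Suppose for contradiction that $S^{ext}(i,i)=0$. Since $\Om_i$ is compact, the infimum over $x\in\Om_i$ is attained, so there is $x\in\Om_i$ and a preimage $z_0$ with $\s(z_0)=x$, $z_0\notin\Om_i$, and a point $y_0\in\Om_i$ with $S(y_0,z_0)+A(z_0)=0$. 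Because $A\le 0$ and, by Lemma \ref{lemaManebon}(4), $S(z_0,x)=A(z_0)$, this says $S(y_0,z_0)+S(z_0,x)=0$, while also both summands are $\le 0$; in particular $S(z_0,x)=A(z_0)\ge S(y_0,z_0)+S(z_0,x)=0$ forces... more usefully, $S(y_0,z_0)=-A(z_0)\ge 0$, hence $S(y_0,z_0)=0$ and $A(z_0)=0$.

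Next I would leverage the subadditivity/triangle inequality (Lemma \ref{lemaManebon}(1)) together with $x,y_0\in\Om_i$, so $S(x,y_0)+S(y_0,x)=0$. Chaining: $0 \ge S(x,z_0)+S(z_0,x) \ge S(x,y_0)+S(y_0,z_0)+S(z_0,x) = S(x,y_0)+0+A(z_0)$. Combined with $S(y_0,x)=-S(x,y_0)$ and $S(z_0,x)=A(z_0)$, one gets that all inequalities are equalities, and in particular $S(x,z_0)+S(z_0,x)=0$, i.e. $z_0\sim x$. But then $z_0$ lies in the same irreducible component as $x$, namely $\Om_i$, contradicting $z_0\notin\Om_i$. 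This contradiction shows $S^{ext}(i,i)<0$.

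The main obstacle I anticipate is making sure the candidate chain of inequalities actually closes up to equalities — i.e. carefully tracking which quantities are $\le 0$ and verifying that the hypothesis $S^{ext}(i,i)=0$ genuinely forces $z_0\sim x$ rather than merely $S(y_0,z_0)=0$. One subtlety is that $S^{ext}(i,i)=0$ a priori only gives, for each $x\in\Om_i$, that the maximum over admissible $(y,z)$ equals $0$ (using compactness of $\Om_i$ to attain the infimum, plus the earlier remark that the sup is a max), so I should phrase the argument so that a single well-chosen triple $(x,y_0,z_0)$ suffices. A secondary point worth a sentence is to invoke the fact (stated just before the lemma, via Lemma \ref{lem-irreclosed}) that $\Om_i$ is closed, so that $z_0\sim x$ together with $x\in\Om_i$ really does put $z_0\in\Om_i$; this is what produces the contradiction with $z_0\notin\Om_i$.
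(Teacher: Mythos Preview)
Your proposal is correct and follows essentially the same route as the paper: both arguments reduce to showing that if $S(\Om_i,z)+A(z)=0$ for some $z\notin\Om_i$ with $\s(z)=x\in\Om_i$, then $S(x,z)=0$ and $S(z,x)=A(z)=0$, forcing $z\sim x$ and hence $z\in\Om_i$, a contradiction. Two minor simplifications: first, the compactness step is unnecessary (and its justification is not given) --- since every term $\max_z\{S(\Om_i,z)+A(z)\}$ is $\le 0$, the equality $S^{ext}(i,i)=\inf_x(\cdots)=0$ already forces the max to equal $0$ for \emph{every} $x\in\Om_i$; second, your chain of inequalities can be shortened by invoking Lemma~\ref{lemaManebon}(3), which gives $S(y_0,z_0)=S(x,z_0)$ directly.
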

\begin{proof}
By our assumption on $A$, $S^{ext}(i,i)$ is non-positive. If $x$ is in $\Om_{i}$ and $\s(z)=x$ with $z\notin \Om_{i}$, then two cases occur:

\begin{enumerate}
\item either $A(z)<0$, and $S(\Om_{i},z)+A(z)<0$,
\item or $A(z)=0$ and $S(\Om_{i},z)<0$. 
\end{enumerate}
Indeed, in the later case if $S(\Om_{i},z)=0$, we get $S(x,z)=0$ (since $x\in \Om_{i}$) and $A(z)=S(z,x)=0$. This yields 
$$S(x,z)+S(z,x)=0,$$
hence $z\sim x$ which does not hold. As $x$ has finitely many preimages 
$$S^{ext}(i,i)\le \max\{S(\Om_{i},z)+A(z),\s(z)=x,\ z\notin\Om_{i}\}<0.$$

\end{proof}

\begin{lemma}
\label{lem-irreclosed}
Irreducible components of the Aubry set are closed sets. 
\end{lemma}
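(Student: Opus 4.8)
The plan is to fix an irreducible component $\Om_i$, take a sequence $(x_n)$ in $\Om_i$ with $x_n \to x$, and show $x \in \Om_i$. The natural candidate point in $\Om_i$ to ``compare'' $x$ with is any fixed $y \in \Om_i$; since all points of $\Om_i$ are equivalent, $S(y,\cdot) = S(x_n,\cdot)$ for every $n$ by Lemma~\ref{lemaManebon}(3), so I really only need to understand how $S(\cdot,\cdot)$ behaves in its first variable under the limit $x_n\to x$. First I would recall from the bullet list in the statement (and Lemma~\ref{lemaManebon}) that $z\mapsto S(w,z)$ is Lipschitz for fixed $w$, while $w\mapsto S(w,z)$ is only upper semi-continuous. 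Upper semicontinuity in the first variable gives one inequality immediately: $S(x,y) \ge \limsup_n S(x_n,y)$. I must then produce the reverse direction, i.e. control $S(y,x)$ and the relevant quantities from below, which is where Lemma~\ref{lem-techmane} enters.

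The key step is to show $S(y,x) \ge \limsup_n S(y,x_n)$ as well, so that together with $S(x,y) \ge \limsup_n S(x_n,y)$ and subadditivity $S(x,y)+S(y,x)\le S(x,x)\le 0$ one can run the chain
\[
0 \ge S(x,x) \ge S(x,y) + S(y,x) \ge \limsup_n \big(S(x_n,y) + S(y,x_n)\big) = \limsup_n S(x_n,x_n)\text{-type }0 = 0,
\]
forcing $S(x,x)=0$ and $S(x,y)+S(y,x)=0$, hence $x\sim y$ and $x\in\Om_i$. To get $S(y,x)\ge \limsup_n S(y,x_n)$, I would unwind the definition of $S(y,x_n)$: for each $n$ there are near-$y$ orbit segments $\bar\xi_n$ with $\s^{k_n}(\bar\xi_n) = x_n$ and $S_{k_n}(A)(\bar\xi_n)$ close to $S(y,x_n)$. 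The subtlety is that these segments land on $x_n$, not on $x$. I would fix this by a prefix-surgery argument: since $x_n\to x$, the first (say) $\ell_n$ digits of $x_n$ and $x$ agree with $\ell_n\to\infty$; replace the tail of $\bar\xi_n$ beyond coordinate $k_n$ by the tail of $x$ after coordinate $\ell_n$ — more precisely build a new segment that shadows $\bar\xi_n$ for $k_n$ steps and then maps into $x$ — using that $x_n$ and $x$ share a long prefix so the concatenation is admissible in the SFT, and using Lipschitz continuity of $A$ (hence of Birkhoff sums along exponentially-close orbits) to bound the change in $S_{k_n}(A)$ by something $\to 0$. Then Lemma~\ref{lem-techmane} applied to these new segments converging to $y$ with return time to $x$ going to infinity yields $S(y,x) \ge \limsup_n S_{k_n}(A)(\bar\xi_n) - o(1) = \limsup_n S(y,x_n)$.

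The main obstacle is exactly this surgery/approximation step: one needs that the orbit segments realizing $S(y,x_n)$ can be perturbed to realize (up to vanishing error) a lower bound for $S(y,x)$, which requires both the admissibility of the spliced words in the subshift of finite type (handled by the long common prefix of $x_n$ and $x$, provided $n$ is large relative to the SFT's memory) and a uniform Lipschitz/distortion estimate so that the Birkhoff sums move by at most $O(2^{-c\,\ell_n})\to 0$. A clean alternative, if available from the earlier technical section, is to quote upper semicontinuity of $x\mapsto S(x,y)$ directly (it is listed among the basic properties) together with continuity of $y\mapsto S(x,y)$ to shortcut both inequalities; then the proof collapses to: $S(x,x) \ge S(x,y)+S(y,x) \ge \limsup_n S(x_n,y) + \lim_n S(y,x_n) = \limsup_n\big(S(x_n,y)+S(y,x_n)\big) = 0$, using that $S(x_n,y)+S(y,x_n)=S(x_n,x_n)=0$ for all $n$ because $x_n\sim y$. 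I would present this streamlined version as the main argument and fall back on the explicit surgery only if the upper semicontinuity statement needs re-justification in this setting.
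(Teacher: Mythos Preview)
Your streamlined version at the end is correct and is essentially the paper's argument: one direction follows from Lipschitz continuity of $S$ in the second variable, the other from upper semicontinuity in the first, and non-positivity then forces both $S(x,y)$ and $S(y,x)$ to vanish so that $x\sim y$. Two small remarks. First, your surgery plan targets the wrong direction: showing $S(y,x)\ge\limsup_n S(y,x_n)$ is exactly the Lipschitz (second-variable) estimate and needs no splicing at all; it is $S(x,y)$, with the limit point sitting in the \emph{first} slot, that requires the upper-semicontinuity input. Second, the paper does not quote upper semicontinuity as a black box but re-derives it in place by an explicit $\varepsilon$-argument: pick $x_N$ with $d(x_N,x)<\varepsilon/2$, take orbit segments realizing $S(x_N,y)\approx 0$ that start within $\rho<\varepsilon/2$ of $x_N$, and observe they start within $\varepsilon$ of $x$, whence $S^\varepsilon(x,y)\ge -\alpha$ for every $\alpha>0$. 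This is precisely the content of the upper-semicontinuity property you invoke, so your shortcut and the paper's explicit computation are the same idea packaged differently.
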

\begin{proof}
Let us pick $x$ and $y_{n}$ such that $x\sim y_{n}$ for every $n$. We assume that $(y_{n})$ converges to $y$. 
We prove that $y\sim x$. 

We remind that $S(x,.)$ is Lipschitz continuous. Furthermore, $\sim y_{n}$ means $S(x,y_{n})+S(y_{n}x)=0$, and since $S$ is non-positive it yields (for every $n$)
$$S(x,y_{n})=0=S(y_{n},x).$$
Then, doing $n\to+\8$ in $S(x,y_{n})=0$ yields $S(x,y)=0$. 

\medskip
Let us now prove that $S(y,x)=0$ holds. 

Let $\al$ be positive. Let $\eps$ be positive. 
We choose $N$ such that $d(y_{N},y)<\eps/2$. As $S(y_{N},x)=0$, we get 
$$\lim_{\rho\to0}\sup\{S_{n}(A)(z),\ \s^{n}(z)=x,\ d(z,y_{N})<\rho\}=0.$$
We can find $\rho_{0}>0$ such that for every $0<\rho<\rho_{0},$
$$S^{\rho}(y_{N},x)=\sup\{S_{n}(A)(z),\ \s^{n}(z)=x,\ d(z,y_{N})<\rho\}>-\frac\al2.$$
Let us pick $\rho>0$, such that $\rho<\rho_{0}$ and $\rho<\dfrac\eps2$ hold. One can find $z$ and $n$ such that $\s^{n}(z)=x$, $d(z,y_{N})<\rho$ and 
$$S_{n}(A)(z)\ge -\al.$$
Then $d(z,y)<\eps$, $\s^{n}(z)=x$ and $S_{n}(A)(z)>-\al$. 
This yields $$S^{\eps}(x,y)=\sup\{S_{n}(A)(z),\ \s^{n}(z)=x,\ d(z,y)<\eps)\}>-\al.$$
This holds for every $\eps$, hence $S(y,x):=\lim_{\eps\to0}S^{\eps}(y,x)$ is greater than $-\al$. 
This also holds for every $\al$, and since $S(y,x)$ is non-positive, we get $S(y,x)=0$.
\end{proof}

\begin{lemma}
\label{lem-finitcosts3}
For any $i$ and $j$, $S^{ext}(i,j)$ is a finite non-positive real number.
\end{lemma}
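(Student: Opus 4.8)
The plan is to establish the two assertions of the statement separately: non-positivity, which is essentially built into the hypotheses on $A$, and finiteness (boundedness from below), which will follow from a compactness argument. A preliminary simplification: by Lemma~\ref{lemaManebon}(3), for $y\in\Om_{j}$ the value $S(y,z)$ does not depend on the choice of $y$, so I write it $S(\Om_{j},z)$; moreover, for a fixed $x\in\Om_{i}$ the point $x$ has only finitely many $\s$-preimages in the subshift of finite type $X$, so the supremum defining $S^{ext}(j,i)$ runs over a finite set and is attained.

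For non-positivity: on $X$ we have $A\le 0$, and the Ma\~n\'e potential is everywhere non-positive, so $S(\Om_{j},z)+A(z)\le 0$ for every admissible $z$. Hence, for each $x\in\Om_{i}$ the inner (finite) maximum is $\le 0$, and taking the infimum over $x\in\Om_{i}$ gives $S^{ext}(j,i)\le 0$. Here one uses that for each $x\in\Om_{i}$ there is at least one preimage $z\notin\Om_{i}$, so that the indexing set of the maximum is non-empty; I return to this below.

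For finiteness from below I argue by compactness. Since $A$ is Lipschitz on the compact space $X$ it is bounded: $A\ge -\|A\|_{\infty}$. Fix $y_{0}\in\Om_{j}$; by Lemma~\ref{lemaManebon}(2) the map $z\mapsto S(y_{0},z)=S(\Om_{j},z)$ is Lipschitz on $X$, hence bounded below, say $S(\Om_{j},\cdot)\ge -C_{j}$ for some finite $C_{j}\ge 0$. Therefore $S(\Om_{j},z)+A(z)\ge -C_{j}-\|A\|_{\infty}$ for every $z\in X$, so each inner maximum is $\ge -C_{j}-\|A\|_{\infty}$, and consequently $S^{ext}(j,i)\ge -C_{j}-\|A\|_{\infty}>-\infty$. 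Together with the previous step this shows $S^{ext}(j,i)\in[-C_{j}-\|A\|_{\infty},\,0]$, which is the claim.

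The only delicate point is the one flagged above: one must ensure that for every $x\in\Om_{i}$ the set $\{z:\s(z)=x,\ z\notin\Om_{i}\}$ is non-empty, otherwise the inner supremum would be over the empty set and $S^{ext}(j,i)$ could a priori be $-\infty$. This is where the standing assumption that the transition matrix is aperiodic enters, guaranteeing enough preimages; it is in fact the same fact already used tacitly in the proof of Lemma~\ref{lem-siinul}. Granting this, the rest of the argument is routine, relying only on $A\le 0$, $S\le 0$, and continuity of $A$ and of $S(\Om_{j},\cdot)$ on the compact space $X$.
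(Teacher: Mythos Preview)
Your argument is correct and follows essentially the same route as the paper's: non-positivity comes from $A\le 0$ and $S\le 0$, and finiteness from below comes from the Lipschitz continuity (hence boundedness on the compact $X$) of $z\mapsto S(\Om_j,z)$, combined with the bound $A\ge -\|A\|_\infty$. The paper's proof is in fact terser: it omits the $\|A\|_\infty$ term and simply writes $S^{ext}(j,i)\ge -\|S(x,\cdot)\|_\infty$ for any fixed $x\in\Om_j$.

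You are more scrupulous than the paper in flagging the non-emptiness of $\{z:\s(z)=x,\ z\notin\Om_i\}$; the paper's own proof silently assumes this, both here and in Lemma~\ref{lem-siinul}. Your appeal to aperiodicity is the right instinct, but note that primitivity of the transition matrix does not by itself force every $x\in\Om_i$ to have an order-$1$ preimage outside $\Om_i$; the paper only secures preimages outside $\Om_i$ at some bounded order $N_0$ (Lemma~\ref{lem-antecompo2} and Claim~\ref{claimbypro}). So this is a genuine subtlety that both your proof and the paper's leave tacit in the definition of $S^{ext}$.
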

\begin{proof}
By our assumption on $A$, $S^{ext}(i,j)$ is non-positive. 
Then, for any $x\in \Om_{i}$, and for any $y$, 
$$\sup_{z\in\Om_{i}}S(z,y)\ge S(x,y).$$
The map $y\mapsto S(x,y)$ is Lipschitz continuous hence bounded on $X$. Therefore 
$$S^{ext}(i,j)=\inf_{y'\in\Om_{j}}\sup_{z\in \Om_{i}}\{S(z,y),\ y\notin\Om_{i}, \s(y)=y'\}\ge -||S(x,\cdot)||_{\8}>-\8.$$
\end{proof}

\begin{lemma}
\label{lem-bonnesuite}
Let $y$ be in $\Om_{i}$ and $x$ be in $X$. There exists a sequence of points $(\xi^{n})$  and integers $k_{n}$ such that 
\begin{enumerate}
\item $\xi^{n}\to y$ as $n\to+\8$, 
\item $\s^{k_{n}}(\xi^{n})=x$,
\item $S(y,x)=\lim_{\ninf}S_{k_{n}}(A)(\xi^{n})$.
\end{enumerate}
\end{lemma}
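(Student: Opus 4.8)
The plan is to read the sequence straight off the definition $S(y,x)=\lim_{\eps\to 0}S^{\eps}(y,x)$, via a diagonal choice along $\eps=1/n$. First I would record that $S(y,x)$ is a finite real number: by Lemma \ref{lemaManebon}(2) the map $z\mapsto S(y,z)$ is Lipschitz continuous, hence real-valued, so $S(y,x)\in\R$. Since moreover $S^{\eps}(y,x)$ is non-decreasing in $\eps$, we have $S^{\eps}(y,x)\downarrow S(y,x)$ as $\eps\downarrow 0$, and therefore
$$-\8<S(y,x)\le S^{\eps}(y,x)\le 0\qquad\text{for every }\eps>0,$$
the last inequality because $A\le 0$ forces every Birkhoff sum $S_{k}(A)$ to be $\le 0$. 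In particular, for each $\eps>0$ the set $\{z\ :\ \s^{k}(z)=x,\ d(y,z)<\eps,\ k\ge 1\}$ over which $S^{\eps}(y,x)$ is a supremum is non-empty, since otherwise $S^{\eps}(y,x)$ would equal $-\8$.

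Then, for each $n\ge 1$, I would use the definition of the supremum to pick a point $\xi^{n}$ and an integer $k_{n}\ge 1$ with $\s^{k_{n}}(\xi^{n})=x$, $d(y,\xi^{n})<1/n$, and $S_{k_{n}}(A)(\xi^{n})\ge S^{1/n}(y,x)-\frac1n$. Item (1) holds because $d(y,\xi^{n})<1/n\to 0$, and item (2) is immediate. For item (3), note that $\xi^{n}$ lies in the set defining $S^{1/n}(y,x)$, so $S_{k_{n}}(A)(\xi^{n})\le S^{1/n}(y,x)$ as well; hence
$$S^{1/n}(y,x)-\frac1n\ \le\ S_{k_{n}}(A)(\xi^{n})\ \le\ S^{1/n}(y,x),$$
and since both ends converge to $S(y,x)$ as $n\to+\8$, the squeeze gives $S_{k_{n}}(A)(\xi^{n})\to S(y,x)$.

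Note that nothing here uses $y\in\Om_{i}$ beyond the finiteness of $S(y,\cdot)$, and the argument gives no a priori control on the size of $k_{n}$ (none is requested in the statement). The only step that genuinely leans on the earlier results is this finiteness of $S(y,x)$: it is what makes the approximating sets non-empty and what turns the two-sided estimate into a squeeze between finite sequences with a common finite limit. I do not expect any real obstacle; the remainder is routine bookkeeping with the definition of $S^{\eps}$.
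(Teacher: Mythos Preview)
Your argument is correct and is essentially the same as the paper's: both set $\eps=1/n$ and pick $\xi^{n}$, $k_{n}$ with $d(y,\xi^{n})<1/n$, $\s^{k_{n}}(\xi^{n})=x$, and $S^{1/n}(y,x)-1/n\le S_{k_{n}}(A)(\xi^{n})\le S^{1/n}(y,x)$, then conclude by the squeeze. Your write-up is in fact more careful than the paper's, since you explicitly justify finiteness of $S(y,x)$ and non-emptiness of the approximating sets before invoking the supremum.
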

\begin{proof}
We get $S(y,x)=\lim_{\eps\to0}S^{\eps}(y,x)$. Set $\eps:=\dfrac1n$ and consider $\xi^{n}$ and $k_{n}$ such that 
$$d(y,\xi^{n})\le \dfrac1n,\quad S^{\frac1n}(y,x)\ge S_{k_{n}}(A)(\xi^{n})\ge S^{\frac1n}(y,x)-\dfrac1n, \quad x=\s^{k_{n}}(\xi^{n}).$$
Then, 
$\xi^{n}\to y$ as $\ninf$
\end{proof}


\subsection{Transfer Operator, eigenfunctions, calibrated subactions}
For an irreducible subshift of fintie type (not necessarily $X$), say $Y$, and $B:Y\to \R$ Lipschizt-continuous, 
The transfer operator $L_B$ is defined by
\[L_{B}(\psi)(x) = \sum_{\sigma(z) =x}e^{B(z)}\psi(z).\]
We refer to \cite{Bowen} for complete study of this operator. It acts on $\CC(X)$ and on the space  $\CC^{0+1}(X)$ of Lipschitz continuous functions. Its spectral radius $\l_{B}$ (for $|\quad |_{\8}$-norm) is a single dominating eigenvalue on $\CC^{0+1}(X)$. It also turns out to be equal to $e^{P}$, where $P$ is the pressure associated to $B$ and for the dynamical system $(Y,\s)$. The associated 1-dimensional  eigen-space is $span(H_{B})$, where $H_{B}$ is Lipschitz continuous, positive and uniquely determined up to some  normalization. The dual operator $L_{B}^{*}$ for the $|\quad |_{\8}$-norm acts on the set of measures and $\nu_{B}$ is the unique probability satisfying $L_{B}^{*}(\nu_{B})=\l_{B}\nu_{B}$. It is referred to as the \emph{eigenmeasure} or the \emph{conformal measure}.

In the case $Y=X$ and $B:=\be.A$, then the family of functions $\dfrac1\be\log H_{\be.A}$ is equicontinuous (actually Lipschitz continuous). Any accumulation point is a \emph{calibrated subaction}. We remind that a calibrated subaction for $A$ is a Lipschitz continuous function $V:X\to\R$ satisfying for every $x\in X$ 
$$\max_{\sigma(z)=x} [A(z) + V(z)] = V(x).$$
Furthermore we have 

\begin{lemma}[See \cite{BLT,Leplaideur-Mengue}]\label{lem-mane}
For any calibrated subaction $V$ we have $\forall\, x_0 \in X$
$$V(x_0) = \sup_{y\in \mathcal{M}_{a}} \left\{V(y) +  S(y,x_0)\right\}=\sup_{a\in \Omega} \left\{V(y) +  S(y,x_0)\right\}.$$
\end{lemma}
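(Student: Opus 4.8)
The plan is to prove the two equalities in Lemma~\ref{lem-mane} by combining the calibration identity with the subadditivity of the Ma\~n\'e potential (Lemma~\ref{lemaManebon}(1)) and the identity $S(x,y)=A(x)$ when $\s(x)=y$ (Lemma~\ref{lemaManebon}(4)). Since $\CM_{a}\subseteq\Omega$, the inequality $\sup_{y\in\CM_{a}}\{V(y)+S(y,x_0)\}\le\sup_{y\in\Omega}\{V(y)+S(y,x_0)\}$ is automatic, so it suffices to establish
\[
V(x_0)\ \ge\ \sup_{y\in\Omega}\{V(y)+S(y,x_0)\}
\qquad\text{and}\qquad
V(x_0)\ \le\ \sup_{y\in\CM_{a}}\{V(y)+S(y,x_0)\}.
\]

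\medskip\noindent\textbf{The inequality $V(x_0)\ge V(y)+S(y,x_0)$ for all $y\in X$ (in particular $y\in\Omega$).} First I would show that a calibrated subaction satisfies $V(x_0)\ge V(z)+S_n(A)(z)$ whenever $\s^n(z)=x_0$: iterating the calibration identity, $V(\s^k(z))\ge A(\s^k(z))+V(\s^k(z))\ge\dots$, more precisely $V(\s^{k}(z))\ge A(\s^{k-1}(z))+V(\s^{k-1}(z))$ for each $k$ with $\s(\s^{k-1}z)=\s^k z$, so telescoping from $k=n$ down to $k=0$ gives $V(x_0)=V(\s^n z)\ge S_n(A)(z)+V(z)$. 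Taking $z=\ol\xi_m$ along a sequence realizing $S(y,x_0)$ as in Lemma~\ref{lem-bonnesuite} (so $\ol\xi_m\to y$, $\s^{k_m}(\ol\xi_m)=x_0$, $S_{k_m}(A)(\ol\xi_m)\to S(y,x_0)$), using continuity of $V$ we pass to the limit to get $V(x_0)\ge V(y)+S(y,x_0)$. Hence $V(x_0)\ge\sup_{y\in\Omega}\{V(y)+S(y,x_0)\}$.

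\medskip\noindent\textbf{The reverse inequality, attaining the sup on $\CM_{a}$.} Starting from $x_0$, the calibration identity gives a point $x_{-1}$ with $\s(x_{-1})=x_0$ and $V(x_0)=A(x_{-1})+V(x_{-1})$; iterating backwards produces a full pre-orbit $(x_{-n})_{n\ge0}$ with $\s(x_{-n-1})=x_{-n}$ and $V(x_0)=S_n(A)(x_{-n})+V(x_{-n})$ for every $n$. Let $\mu$ be any weak-$*$ accumulation point of the empirical measures $\frac1n\sum_{k=0}^{n-1}\delta_{\s^k(x_{-n})}$; standard arguments show $\mu\in\CM(\s)$ and $\int A\,d\mu=\lim\frac1n S_n(A)(x_{-n})$. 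Since $V$ is bounded, $S_n(A)(x_{-n})=V(x_0)-V(x_{-n})$ is bounded, so $\frac1n S_n(A)(x_{-n})\to0=m(A)$; thus $\mu$ is maximizing, $\supp\mu\subseteq\CM_{a}\subseteq\Omega$. It remains to extract from the pre-orbit a point $y\in\CM_{a}$ with $V(y)+S(y,x_0)\ge V(x_0)$: pick $y$ an accumulation point of $(x_{-n})$ lying in $\supp\mu$ (obtained by choosing the backward times so that the orbit spends an increasing fraction of time near $y$, then using Lemma~\ref{lem-techmane} to see $S(y,x_0)\ge\limsup S_{k}(A)$ along the relevant returns), which gives $V(y)+S(y,x_0)\ge\lim(V(x_{-n})+S_n(A)(x_{-n}))=V(x_0)$. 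Combined with the first part, all three quantities coincide.

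\medskip The main obstacle is the last extraction step: producing a single point $y$ \emph{in} $\CM_{a}$ (equivalently in $\Omega$) along the backward orbit that simultaneously realizes $V(x_0)=V(y)+S(y,x_0)$. One must argue carefully that the backward orbit accumulates on the support of a maximizing measure and that the Birkhoff sums along the approaching blocks converge to $S(y,x_0)$ rather than merely bounding it; this uses the lower-semicontinuity-type estimate of Lemma~\ref{lem-techmane} together with a diagonal choice of backward times. The remaining pieces --- iterating calibration, telescoping Birkhoff sums, and the weak-$*$ compactness argument for maximality --- are routine.
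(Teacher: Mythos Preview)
The paper does not include a proof of Lemma~\ref{lem-mane}: it is stated with a reference to \cite{BLT,Leplaideur-Mengue} and used as a black box. There is therefore no ``paper's own proof'' to compare with; what you have written is precisely the standard argument one finds in those references.

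Your sketch is correct in its architecture. The inequality $V(x_0)\ge V(y)+S(y,x_0)$ follows exactly as you say: iterate the calibration inequality to get $V(x_0)\ge V(z)+S_n(A)(z)$ whenever $\s^n(z)=x_0$, then pass to the limit along a realizing sequence for $S(y,x_0)$ using continuity of $V$. For the reverse inequality, the backward calibrating pre-orbit $(x_{-n})$ with $V(x_0)=S_n(A)(x_{-n})+V(x_{-n})$ is the right object, and any accumulation point $y$ of $(x_{-n})$ automatically satisfies $V(y)+S(y,x_0)\ge V(x_0)$ by Lemma~\ref{lem-techmane} and continuity of $V$. To see $y\in\Omega$, look at the orbit segments between two consecutive return times $n_k<n_{k+1}$ with $x_{-n_k},x_{-n_{k+1}}$ close to $y$: the Birkhoff sum along that segment equals $V(x_{-n_k})-V(x_{-n_{k+1}})\to 0$, and a bounded-distortion adjustment (replacing the tail $x_{-n_k}$ by $y$) produces genuine preimages of $y$ whose Birkhoff sums still go to $0$, whence $S(y,y)\ge 0$ and $y\in\Omega$. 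This already gives the equality with the supremum over $\Omega$, which is all the paper actually uses afterwards.

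The one place where your write-up is genuinely soft is the claim that the accumulation point $y$ lies in $\CM_a$, i.e.\ in the support of some maximizing measure. Knowing that the empirical measures $\frac1n\sum_{k=0}^{n-1}\delta_{\s^k(x_{-n})}$ accumulate on maximizing measures does \emph{not} by itself force accumulation points of the sequence $(x_{-n})$ (the \emph{initial} points of those orbit segments) to lie in the support of such a limit: the visits near $y$ could have vanishing frequency. The clean way to close this is to reuse the loop argument above: the segments from $x_{-n_{k+1}}$ to $x_{-n_k}$ have length going to infinity, Birkhoff sums going to $0$, and both endpoints converging to $y$; any weak-$*$ limit of the empirical measures along \emph{these} segments is invariant, maximizing, and has $y$ in its support (since every neighbourhood of $y$ is hit at the start and end of each segment with nonvanishing frequency once one controls the recurrence times). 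With that refinement your proof is complete and matches the argument in the cited references.
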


\begin{corollary}\label{coro-V(sigma)}
If $V$ is a calibrated subaction for $A$ then $V$ is constant on each set irreducible component $\Om_i$ of $\Om$.	
\end{corollary}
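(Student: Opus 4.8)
The plan is to deduce this directly from Lemma~\ref{lem-mane}. Recall that $V$ is a calibrated subaction and that by Lemma~\ref{lem-mane} we have, for every $x_0\in X$,
$$V(x_0)=\sup_{y\in\Om}\{V(y)+S(y,x_0)\}.$$
So I would fix an irreducible component $\Om_i$ and two points $x,x'\in\Om_i$, and aim to show $V(x)=V(x')$.

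First I would exploit that $x$ and $x'$ are in the same component, i.e.\ $x\sim x'$, which by definition means $S(x,x')+S(x',x)=0$, and since $S$ is non-positive this forces $S(x,x')=0$ and $S(x',x)=0$. Next, applying Lemma~\ref{lem-mane} at $x_0=x'$ and keeping only the term $y=x$ in the supremum gives $V(x')\ge V(x)+S(x,x')=V(x)+0=V(x)$. By symmetry (applying Lemma~\ref{lem-mane} at $x_0=x$ and keeping the term $y=x'$), $V(x)\ge V(x')+S(x',x)=V(x')$. Combining the two inequalities yields $V(x)=V(x')$, so $V$ is constant on $\Om_i$.

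There is really no serious obstacle here: the content is entirely in Lemma~\ref{lem-mane} (which identifies $V$ via the Ma\~n\'e potential over $\Om$) together with the non-positivity of $S$ and the defining relation of the equivalence classes. The only thing to be slightly careful about is to justify that we are allowed to restrict the supremum in Lemma~\ref{lem-mane} to a single point $y$ — but that is immediate, as a supremum over a set is at least the value at any one element of that set. One could phrase the whole argument as: for $x,x'\in\Om_i$, $V(x)\ge V(x')+S(x',x)=V(x')$ and $V(x')\ge V(x)+S(x,x')=V(x)$, hence equality. Finally, since $\Om_i$ was an arbitrary irreducible component, $V$ is constant on each of them, which is the claim.
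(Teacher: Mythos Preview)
Your proof is correct and essentially identical to the paper's own argument: both use Lemma~\ref{lem-mane} to bound $V(x)\ge V(x')+S(x',x)=V(x')$ and symmetrically, after observing that $S(x,x')=S(x',x)=0$ for $x,x'$ in the same irreducible component.
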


\begin{proof} 
	Let $x,y \in \Om_i$ and $V$ be a calibrated subaction.
	As $S(x,y)=S(y,x) = 0$,  applying Lemma \ref{lem-mane} we have
	\[ V(x) =\sup_{z\in \Omega}[S(z,x)+V(z)] \geq S(y,x) +V(y) = V(y).\]
and	\[V(y) = \sup_{z\in \Omega}[S(z,y)+V(z)] \geq S(x,y) +V(x) = V(x).\]
\end{proof}

\begin{notation}
 We shall write $V(\Om_{i})$.
\end{notation}

\subsubsection{On the hypothesis on $A$}
For a general $A$ one get 
$$e^{P(\be.A)}H_{\be}(A)(x)=\sum_{y,\ \s(y)=x}e^{\be. A(y)}H_{\be}(y).$$
Doing $\dfrac1\be\log$ in this equality and then $\be\to+\8$, yields 
$$m(A)+V(x)=\max_{y}\{A(y)+V(y)\}.$$
Thus for any $y$ we get 
$$A(y)+V(y)-V\circ \s(y)-m(A)\le0.$$
Hence, exchanging the general $A$ by $A+V-V\circ \s-m(A)$, we get a new potential, equal to $A$ up to a co-boundary plus a constant, that is always non-positive and which maximal integral with respect to any invariant measure in 0. 

\subsection{A kind of Laplace method result}
\begin{lemma}
\label{lem-laplace}
Let $(E,\de)$ be a metric space. Let $\varphi_{1},\ldots \varphi_{k}$ be continuous from $E$ to $\R$. Let $(\phi_{n, i})$ be a sequences of continuous functions from $E$ to $\R$ which uniformly converge to $\phi_{i}$. 
Let $(x_{n})$ be a sequence which converges to $x$ in $E$.
Then, $\disp \dfrac1n\log\left( \sum_{i=1}^{k}e^{n.\varphi_{i}(x_{n})+n\phi_{n,i}(x_{n})}\right)$ converges to $\max_{i}\{\varphi_{i}(x)+\phi_{i}(x)\}$. 
\end{lemma}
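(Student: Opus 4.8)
The plan is to prove matching upper and lower bounds for the liminf and limsup of the quantity
$$L_n := \frac1n\log\left(\sum_{i=1}^{k}e^{n\varphi_i(x_n)+n\phi_{n,i}(x_n)}\right).$$
For the \textbf{lower bound}, I would simply drop all but one term: for each fixed $i$,
$$L_n \ge \frac1n\log\left(e^{n\varphi_i(x_n)+n\phi_{n,i}(x_n)}\right) = \varphi_i(x_n)+\phi_{n,i}(x_n).$$
Since $x_n\to x$, each $\varphi_i$ is continuous, and $\phi_{n,i}\to\phi_i$ uniformly (so $\phi_{n,i}(x_n)\to\phi_i(x)$, using the standard argument $|\phi_{n,i}(x_n)-\phi_i(x)|\le |\phi_{n,i}(x_n)-\phi_i(x_n)|+|\phi_i(x_n)-\phi_i(x)|$, the first controlled by uniform convergence, the second by continuity of $\phi_i$), the right-hand side converges to $\varphi_i(x)+\phi_i(x)$. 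Taking $\liminf_n$ and then the maximum over $i$ gives $\liminf_n L_n \ge \max_i\{\varphi_i(x)+\phi_i(x)\}$.

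For the \textbf{upper bound}, write $M_n := \max_i\{\varphi_i(x_n)+\phi_{n,i}(x_n)\}$. Then
$$\sum_{i=1}^{k}e^{n\varphi_i(x_n)+n\phi_{n,i}(x_n)} \le k\, e^{nM_n},$$
so $L_n \le \frac{\log k}{n} + M_n$. As $n\to+\infty$, $\frac{\log k}{n}\to 0$, and by the same convergence argument as above each $\varphi_i(x_n)+\phi_{n,i}(x_n)\to\varphi_i(x)+\phi_i(x)$; since the maximum of finitely many convergent sequences converges to the maximum of the limits, $M_n\to\max_i\{\varphi_i(x)+\phi_i(x)\}$. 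Hence $\limsup_n L_n \le \max_i\{\varphi_i(x)+\phi_i(x)\}$.

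Combining the two bounds yields $\lim_n L_n = \max_i\{\varphi_i(x)+\phi_i(x)\}$, as claimed. There is no real obstacle here; the only mild care needed is the interchange ``value at $x_n$ of a uniformly convergent sequence converges to the limit value at the limit point,'' which is routine but should be stated once and reused for both bounds. The finiteness of $k$ is what makes both the one-term lower bound and the factor-$k$ upper bound cheap.
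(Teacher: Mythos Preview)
Your proof is correct and follows essentially the same approach as the paper: a one-term lower bound and a $k$-times-the-maximal-term upper bound, combined with the routine fact that $\varphi_i(x_n)+\phi_{n,i}(x_n)\to\varphi_i(x)+\phi_i(x)$. The paper writes the same argument with explicit $\varepsilon$--$N$ bookkeeping (and sums over the set of maximizing indices for the lower bound rather than a single term), but there is no substantive difference.
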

\begin{proof}
Let us  set $\l_{i}:=\varphi_{i}(x)+\phi_{i}(x)$,  $\l:=\max_{i} \l_{i}$.
Let $I:=\{i,\ \l_{i}=\l\}$.

Let $0<\eps$. Let $N$ be such that for any $n\ge N$, for any $i$, $\disp ||\phi_{n,i}-\phi_{i}||_{\8}<\eps$.  

Let $N'$ be such that for every $n\ge N'$ and for any $i$, $\disp |\varphi_{i}(x_{n})-\varphi_{i}(x)|<\eps,$ and $\disp  |\phi_{i}(x_{n})-\phi_{i}(x)|<\eps,$

\medskip
For $n\ge \max(N, N')$ we get 

\begin{eqnarray*}
\sum_{i=1}^{k}e^{n.\varphi_{i}(x_{n})+n\phi_{n,i}(x_{n})}&\ge & \sum_{i\in I}e^{n.\varphi_{i}(x_{n})+n\phi_{n,i}(x_{n})}\\
&\ge & \sum_{i\in I}e^{n.(\varphi_{i}(x)-\eps)+n(\phi_{i}(x_{n})-\eps)}\\
&\ge & \sum_{i\in I}e^{n.(\varphi_{i}(x)-\eps)+n(\phi_{i}(x)-2\eps)}\\
&\ge& e^{n(\l-3\eps)}.
\end{eqnarray*}
This yields for any $\eps$, $\disp\liminf_{\ninf}\dfrac1n\log\left(\sum_{i=1}^{k}e^{n.\varphi_{i}(x_{n})+n\phi_{n,i}(x_{n})}\right)\ge \l-3\eps$.

On the other hand we get
\begin{eqnarray*}
\sum_{i=1}^{k}e^{n.\varphi_{i}(x_{n})+n\phi_{n,i}(x_{n})}
&\le & \sum_{i=1}^{k}e^{n.(\varphi_{i}(x)+\eps)+n(\phi_{i}(x_{n})+\eps)}\\
&\le & \sum_{i=1}^{k}e^{n.(\varphi_{i}(x)+\eps)+n(\phi_{i}(x)+2\eps)}\\
&\le& k. e^{n(\l+3\eps)}.
\end{eqnarray*}
This yields  for any $\eps$, $\disp\limsup_{\ninf}\dfrac1n\log\left(\sum_{i=1}^{k}e^{n.\varphi_{i}(x_{n})+n\phi_{n,i}(x_{n})}\right)\le \l+3\eps$.

\end{proof}

\section{Proof}\label{sec-proof} 
In all that part, we shall consider a sequence $(\be_{n})$ such that $\disp \dfrac1{\be_{n}}\log\left(P(\be_{n})-h\right)$ converges to $\ga$. For simplicity we shall always write $\be\to+\8$ even if we actually only consider $\be_{n}\to+\8$.

\subsection{Neighborhoods}

We pick some irreducible component, say $\Om_{i}$, with entropy $h_{i}$ equal to the maximal entropy $h$. We fix some small positive real number $\eps_{0}$ and then, consider a small neighborhood $B(\Om_{i},\eps_{0})$, that is the points at distance to $\Om_{i}$ lower than $\eps_{0}$. 

Then we denote by $\S_{i,\eps_{0}}$ the maximal invariant set in $B(\Om_{i},\eps_{0})$. It is non-empty since it contains $\Om_{i}$ and is a subshift of finite type. It can be decomposed in finitely many  irreducible components, and for simplicity we shall assume that $\S_{i,\eps_{0}}$ is indeed irreducible. We shall discuss this assumption later. 

Then, we denote by $\nu_{\eps_{0}}$ the unique conformal measure for the potential $B\equiv 0$ in $\S_{i,\eps_{0}}$. In the case $\S_{i,\eps_{0}}$ is not irreducible any irreducible component (they are finitely many) contains such a conformal measure.

\medskip
We then do the same work for a smaller neighborhood $B(\Om_{i},\eps)$ with $\eps<\eps_{0}$, with associated subshift  of finite type $\S_{i,\eps_{0}}$ and conformal measure $\nu_{\eps}$ (for the potential constant equal to zero). 

\begin{lemma}
\label{lem-accunueps}
Any accumulation point for the weak* topology $\nu_{1/\8}$ for $\nu_{\eps}$ as $\eps$ goes to 0 has support in $\Om_{i}$.
\end{lemma}
\begin{proof}
For $x\notin\Om_{i}$, $d(\{x\},\Om_{i})$ is positive, thus bigger than some $2.\eps_{1}$. For $\eps<\eps_{1}$, $x\notin B(\Om_{i},\eps)$, then $x$ is not in $\S_{i,\eps}$. Furthermore $\S_{i,\eps}$ contains the support for $\nu_{\eps}$. 

As $B(x,\eps_{1})$ is a clopen set, $\BBone _{B(x,\eps_{1})}$ is a continuous function. Hence for any $\eps<\eps_{1}$, 
$$\nu_{\eps}(B(x,\eps_{1}))=\int \BBone_{B(x,\eps_{1})}\,d\nu_{\eps}=0.$$
This yields $\nu_{1/\8}(B(x,\eps_{1}))=0$, which means that $x$ does not belong to $\supp\nu_{1/\8}$. 
\end{proof}

\begin{remark}
\label{rem-nulimite}
We point out one of the main problems we shall deal with. There is no way to know if $\nu_{1/\8}$ has full support or no.  
$\blacksquare$\end{remark}

In particular, we will need estimates on points $x\in \Om_{i}$ with preimages outside $\Om_{i}$. As we cannot ensure such a set with positive $\nu_{1/\8}$-measure exists, we need to get round this difficulty.

\begin{lemma}
\label{lem-antecompo2}
There exists $N_{0}$ such that for every $x\in \S_{i,\eps_{0}}$, there exists $x'\in \s^{-N_{0}}(x)$ and $x'\notin \S_{i,\eps_{0}}$.  
\end{lemma}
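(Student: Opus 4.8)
\textbf{Proof plan for Lemma \ref{lem-antecompo2}.}

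The plan is to argue by contradiction, exploiting the fact that $\Om_i$ is a \emph{proper} invariant subset of $\S_{i,\eps_0}$ together with the aperiodicity of the underlying matrix of $X$. First I would note that $\S_{i,\eps_0}$ is itself a subshift of finite type; after passing to a high power of $\s$ and a recoding (since we are assuming $\S_{i,\eps_0}$ is irreducible), every point has a preimage inside $\S_{i,\eps_0}$, so the content of the lemma is that within a \emph{uniform} number $N_0$ of backward steps one can escape $\S_{i,\eps_0}$. The key structural input is that $\Om_i\subsetneq\S_{i,\eps_0}$: indeed, by Lemma \ref{lem-siinul} there is a point $x_0\in\Om_i$ and a preimage $z$ of $x_0$ with $z\notin\Om_i$; since $\Om_i$ is closed (Lemma \ref{lem-irreclosed}), for $\eps_0$ chosen small this $z$ still lies in $B(\Om_i,\eps_0)$, hence its forward orbit stays in $\S_{i,\eps_0}$, so $z\in\S_{i,\eps_0}\setminus\Om_i$. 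Thus $\S_{i,\eps_0}$ strictly contains $\Om_i$.

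Next I would use that $\S_{i,\eps_0}$ is defined by forbidding all but finitely many transitions: a point $w$ fails to lie in $\S_{i,\eps_0}$ precisely when some finite sub-block of $w$ is ``forbidden'' for $\S_{i,\eps_0}$, equivalently when some iterate of $w$ leaves the clopen neighborhood $B(\Om_i,\eps_0)$. Fix once and for all such an escaping point, say $\zeta\in X\setminus\S_{i,\eps_0}$ whose initial block $\zeta_0\ldots\zeta_m$ is forbidden for $\S_{i,\eps_0}$; such $\zeta$ exists because $\Om_i\neq X$ (if $\Om_i=X$ there is nothing to do, or one works inside $\S_{i,\eps_0}$ with the strict inclusion just established, choosing $\zeta\in\S_{i,\eps_0}$ near its boundary and extending the forbidden block). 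Now for an arbitrary $x\in\S_{i,\eps_0}$, aperiodicity of the transition matrix of $X$ gives a uniform $N$ such that for every pair of digits $(a,b)$ there is an admissible word of length exactly $N$ from $a$ to $b$; concatenating $\zeta_0\ldots\zeta_m$, such a connecting word, and then $x$, produces an admissible point $x'\in X$ with $\s^{N_0}(x')=x$ where $N_0:=m+1+N$, and $x'$ begins with the forbidden block $\zeta_0\ldots\zeta_m$, so $x'\notin\S_{i,\eps_0}$. This $N_0$ depends only on $\Om_i$ and $\eps_0$, not on $x$, which is exactly the claim.

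The main obstacle I anticipate is the case analysis around whether $\Om_i$ (or even $\S_{i,\eps_0}$) is all of $X$ and, more delicately, ensuring that the chosen forbidden block for $\S_{i,\eps_0}$ can be \emph{prepended} while keeping the word admissible \emph{in $X$} — i.e.\ coordinating the two finite-type conditions (admissibility in $X$ versus membership in $\S_{i,\eps_0}$). Concretely one must check that the digit $\zeta_m$ can be followed by a length-$N$ $X$-admissible bridge into the first digit $x_0$ of $x$; this is where aperiodicity of $X$'s matrix is used, and one should record that the relevant ``forbidden-for-$\S_{i,\eps_0}$ but admissible-in-$X$'' block exists precisely because $\S_{i,\eps_0}\subsetneq X$ (when that inclusion is proper) or, when $\S_{i,\eps_0}=X$, because then $N_0$ is vacuous/there is nothing to prove. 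A clean way to package all this is to phrase it in terms of the clopen set $B(\Om_i,\eps_0)$: its complement is open and nonempty, contains a cylinder $[\zeta_0\ldots\zeta_m]$, and any point landing in that cylinder after finitely many steps has left $\S_{i,\eps_0}$; then aperiodicity supplies the uniform backward reach.
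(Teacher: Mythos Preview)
Your second paragraph and the final ``clean packaging'' are correct and are exactly the paper's proof: choose a cylinder $[\omega_0\ldots\omega_k]$ disjoint from $B(\Omega_i,\eps_0)$, use aperiodicity of $X$ to bridge from $\omega_k$ to the first digit $x_0$ of $x$ with an admissible word of bounded length, and concatenate; the paper observes the bridge length depends only on $x_0$ and then maximizes over the finitely many possible first digits, while you invoke a single uniform mixing time $N$ --- a harmless variant.

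Your first paragraph, however, is a detour that is both unnecessary and mistaken. The relevant structural input is $B(\Omega_i,\eps_0)\subsetneq X$ (so that a cylinder in its complement exists), not $\Omega_i\subsetneq\Sigma_{i,\eps_0}$; the latter plays no role in the lemma, and your argument for it is backwards: if $z\notin\Omega_i$ with $\Omega_i$ closed, then $z\notin B(\Omega_i,\eps_0)$ for \emph{small} $\eps_0$, not $z\in B(\Omega_i,\eps_0)$ --- and in fact $\Omega_i=\Sigma_{i,\eps_0}$ can perfectly well occur (e.g.\ when $\Omega_i$ is an isolated periodic orbit). You also announce a proof by contradiction but never run one; the argument you actually give is direct, as is the paper's. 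Drop the first paragraph and the contradiction framing, and what remains coincides with the paper's proof.
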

\begin{proof}
If $x=x_{0}x_{1}\ldots$ is a point in $\S_{i,\eps_{0}}$ and $[\om]=[\om_{0}\ldots \om_{k}]$ a cylinder with empty intersection with $B(\Om_{i},\eps_{0})$. Then mixing shows that there exists $z_{0}\ldots z_{r}$ with $z_{0}=\om_{k}$ and $z_{r}=x_{0}$ such that the word $\om_{0}\ldots\om_{k}z_{1}\ldots z_{r-1}x_{0}$ is admissible in $X$. Setting  
$$y:=\om_{0}\ldots\om_{k}z_{1}\ldots z_{r-1}x,$$
we get $\s^{k+r}(y)=x$ and $y\notin \S_{i,\eps_{0}}$. 

Note that, since $\S_{i,\eps_{0}}$ is invariant, no preimage of $y$ can be in $\S_{i,\eps_{0}}$. We also emphasize that $r$ does only depends on $x_{0}$, and the construction is rigid\footnote{in the sense it holds for any point sufficiently close to $x$.}.

Doing the same for any $[x_{0}]$ appearing in $\S_{i,\eps_{0}}$, we get $N_{0}$. 
\end{proof}


\subsection{Computations}
The key result is the following:
\begin{proposition}
\label{prop-minoprincipale}
For any irreducible component $\Om_{j}$, 
$$\ga+V(\Om_{i})\ge S^{ext}(j,i)+V(j)$$
\end{proposition}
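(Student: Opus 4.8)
The plan is to exploit the transfer-operator identity at inverse temperature $\be$ and let $\be\to+\8$ along the chosen sequence $(\be_n)$, tracking the contribution of orbit segments that leave $\Om_i$, travel to $\Om_j$, and return near $\Om_i$. First I would fix the irreducible component $\Om_i$ of maximal entropy and the auxiliary subshift $\S_{i,\eps_0}$ with its conformal measure $\nu_{\eps_0}$. Using $L_{\be A}^*\nu_{\be A}=e^{P(\be)}\nu_{\be A}$ and comparing the masses of a suitable cylinder under the iterated operator, one obtains a two-sided estimate: $P(\be)-h$ is, up to subexponential factors, controlled from below by a sum over admissible return excursions of $e^{\be S_k(A)(\text{excursion})}$ weighted by $H_{\be A}$-values at the endpoints. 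The point of introducing $\S_{i,\eps_0}$ and the conformal measure $\nu_{\eps_0}$ (rather than working with $\nu_{1/\8}$, whose support we cannot control — Remark~\ref{rem-nulimite}) is precisely to have a measure whose mass we can bound below uniformly on cylinders meeting $\Om_i$.

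Next I would carry out the asymptotic. By Lemma~\ref{lem-antecompo2} there is a uniform $N_0$ so that every point of $\S_{i,\eps_0}$ has an $N_0$-preimage outside $\S_{i,\eps_0}$; I would use this to splice an excursion witnessing $S^{ext}(j,i)$ onto the dynamics. Concretely, given $x\in\Om_i$, choose $z\notin\Om_i$ with $\s(z)=x$ realizing the $\sup$ in the definition of $S^{ext}(j,i)$, i.e.\ $S(y,z)+A(z)$ close to $S^{ext}(j,i)$ for some $y\in\Om_j$; then by Lemma~\ref{lem-bonnesuite} approximate $S(y,z)$ by Birkhoff sums $S_{k_n}(A)(\xi^n)$ along points $\xi^n\to y$. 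Concatenating, one gets admissible words $\ol\xi_m\to x$ with $\s^{m}(\ol\xi_m)=x$ and $\frac1m$-scale Birkhoff sum close to $S^{ext}(j,i)$ plus a bounded error, after which Lemma~\ref{lem-techmane} feeds these into $S(x,x)$-type lower bounds. The Laplace-method Lemma~\ref{lem-laplace}, applied to $\frac1\be\log$ of the operator sum with $\varphi_i=$ the Birkhoff-sum densities and $\phi_{n,i}=\frac1\be\log H_{\be A}$ converging to the calibrated subaction $V$, then converts this into the inequality $\ga\ge S^{ext}(j,i)+V(\Om_j)-V(\Om_i)$, using Corollary~\ref{coro-V(sigma)} that $V$ is constant on each component. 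Rearranged, this is exactly $\ga+V(\Om_i)\ge S^{ext}(j,i)+V(j)$.

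The main obstacle I expect is making the lower bound on $P(\be)-h$ rigorous without control on $\supp\nu_{1/\8}$: one needs that the cylinder onto which the excursion is attached carries a definite ($\be$-uniform, or at least subexponential) share of the conformal mass, and that the measure of maximal entropy of $\Om_i$ (giving the $e^{\be h}$ main term) is genuinely seen by $\nu_{\eps_0}$. This is where the irreducibility/mixing of $\S_{i,\eps_0}$ and the rigidity of the gluing in Lemma~\ref{lem-antecompo2} are essential: mixing gives bounded-length connecting words with bounded Birkhoff-sum cost, so the excursion's exponential weight is $e^{\be S^{ext}(j,i)+o(\be)}$ and the ``free'' part of the orbit inside $\S_{i,\eps_0}$ contributes $e^{\be h+o(\be)}$. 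A secondary technical point is handling the reduction ``$\S_{i,\eps_0}$ irreducible'': if it decomposes, one passes to an irreducible component containing $\Om_i$ and having entropy $h$, which exists since $h_i=h$, and repeats the argument there. Finally one lets $\eps_0\to0$ so that the endpoints of the excursion converge into $\Om_i$ and $\Om_j$ respectively, which is needed for the $V$-values to stabilize to $V(\Om_i)$ and $V(\Om_j)$ and for $S(y,z)$ to be the genuine Ma\~n\'e potential value rather than an $\eps_0$-approximation.
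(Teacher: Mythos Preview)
Your overall strategy---iterate the eigenfunction identity for $L_{\be A}$, split preimages into those inside/outside a neighbourhood of $\Om_i$, integrate against a conformal measure for the zero potential, and apply the Laplace lemma---is the paper's, and is sound. But the heart of your lower bound is mis-aimed. You write that ``concatenating, one gets admissible words $\ol\xi_m\to x$ with $\s^m(\ol\xi_m)=x$\ldots after which Lemma~\ref{lem-techmane} feeds these into $S(x,x)$-type lower bounds.'' The points $\xi^n$ from Lemma~\ref{lem-bonnesuite} converge to $y\in\Om_j$, not to $x$; closing the loop back to $x$ costs an extra $S(\Om_i,y)\le 0$ and yields only the trivial inequality $0=S(x,x)\ge S(\Om_i,y)+S(\Om_j,z)+A(z)$. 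More to the point, these concatenated orbits have length $k_n+1\to\8$, so they never sit in the \emph{fixed} $N_0$-preimage sum that the Laplace step actually produces. The paper avoids this entirely: after Laplace gives $\max_y\{S_{N_0}(A)(y)+V(y)\}$ over the finitely many $y\in\s^{-N_0}(x)$ with $y\notin B(\Om_i,\eps_0)$, it applies Lemma~\ref{lem-mane} in the form $V(y)\ge V(\Om_j)+S(\Om_j,y)$. The Ma\~n\'e potential $S(\Om_j,y)$ already encodes the optimal long orbit from $\Om_j$ to $y$, so no $k_n\to\8$ construction enters the main estimate; one then reduces $S(\Om_j,y)+S_{N_0}(A)(y)$ (using $A\equiv 0$ on $\Om_i$ and an auxiliary application of Lemma~\ref{lem-bonnesuite}) to $\max\{S(\Om_j,z)+A(z):\s(z)=x',\,z\notin\Om_i\}\ge S^{ext}(j,i)$. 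That subaction inequality is the missing ingredient in your plan.

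Two further structural points. The paper uses \emph{two} scales: a fixed $\eps_0$ defining the ``outside'' set in the preimage sum, and a second $\eps\downarrow 0$ for the conformal measure $\nu_\eps$ on $\S_{i,\eps}$, so that the weak* limit $\nu_{1/\8}$ is supported in $\Om_i$ and $\frac1\be\log\int H_{\be A}\,d\nu_{1/\8}\to V(\Om_i)$ exactly; working only with $\nu_{\eps_0}$ and postponing $\eps_0\to 0$, as you propose, leaves both $h_{i,\eps_0}>h$ and the location of $V$ on $\supp\nu_{\eps_0}$ uncontrolled on the left-hand side. And the case $j=i$ is treated separately: there the relevant $z\notin\Om_i$ with $\s(z)\in\Om_i$ can be arbitrarily close to $\Om_i$, so the condition $y\notin B(\Om_i,\eps_0)$ gives nothing; the paper instead isolates a set of positive $\nu_{1/\8}$-measure in $\Om_i$ whose exterior first preimages are uniformly bounded away from $\Om_i$, and reruns the estimate with a smaller pair $(\eps_1,N_1)$.
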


The rest of the subsection is devoted to the proof of the proposition. 

\subsubsection{First step: an inequality involving $\Om_{i}$, $N_{0}$ and $\nu_{1/\infty}$}



%

\bigskip
Let us now consider the Ruelle operator for $\beta A$, given by $${L}_{\beta A}(\varphi)(x) = \sum_{y\in \sigma^{-1}(x)}e^{\beta A(y)}\varphi(y).$$ 
Since $A_{\Om_{i}}\equiv0$,  $v(\eps):=\sup_{x\in \S_{i,\eps}}|A(x)|$ goes to $0$ as $\eps$ goes to 0.

We can also consider the Ruelle operator for the zero potential acting over $\CC(\Sigma_{i,\eps})$:
$$\mathcal{L}_{i,\eps}(\varphi)(x) = \sum_{y\in \sigma^{-1}(x),\,y\in \Sigma_{i,\eps}}\varphi(y)$$ where $x\in \Sigma_{i,\eps}$.
Furthermore for the conformal measure\footnote{ or any convex combination of the conformal measures on each irreducible piece if $\S_{i,\eps}$ is not irreducible.} $\nu_{\eps}$  satisfies 
\begin{equation}
\label{eq1-entropipi}
e^{h_{i,\eps}} \int \varphi(x) d\nu_{\eps}(x) = \int \mathcal{L}_{i,\eps}(\varphi)(x) \,d\nu_{\eps}(x),
\end{equation}
where $h_{i,\eps}$ is the topological entropy of $\Sigma_{i,\eps}$, which corresponds to the pressure of the zero function on $\Sigma_{i,\eps}$.

\bigskip
For $\eps<\eps_{0}$, $B(\Om_{i},\eps)\subset B(\Om_{i},\eps_{0})$ and then $\S_{i,\eps}\subset \S_{i,\eps_{0}}$. Therefore, Lemma \ref{lem-antecompo2} yields that  any point in $\S_{i,\eps}$ contains an $N_{0}$-preimage outside $B(\Om_{i,\eps_{0}})$. 

For $x\in \Sigma_{i,\eps}$ we have
	$$e^{N_{0}P(\beta A)}H_{\beta A}(x) = \sum_{y\in \sigma^{-N_{0}}(x),\,y\in \Sigma_{i,\eps}}e^{\beta S_{N_{0}}(A)(y)}H_{\be A}(y) +  \sum_{y\in \sigma^{-N_{0}}(x),\,y\notin \Sigma_{i,\eps}}e^{\beta S_{N_{0}}(A)(y)}H_{\be A}(y).$$

Let $r(\eps)$ denote $\sup_{z\in B(\Om_{i},\eps)}|A(z)|$. Note that $A_{|\Om_{i}}\equiv 0$ (by Lemma \ref{lem-AonOm}), thus $r(\eps)$ goes to $0$ as $\eps$ goes to 0. Then we get 
$$e^{N_{0}P(\beta A)}H_{\beta A}(x) \ge e^{-\be. N_{0}r(\eps)}\sum_{y\in \sigma^{-N_{0}}(x),\,y\in \Sigma_{i,\eps}}H_{\be A}(y) +  \sum_{y\in \sigma^{-N_{0}}(x),\,y\notin \Sigma_{i,\eps}}e^{\beta S_{N_{0}}(A)(y)}H_{\be A}(y).$$
	
This later inequality can be rewritten as

	$$e^{n_{0}P(\beta A)}H_{\beta A}(x)\ge e^{-\be.N_{0}r(\eps)}\mathcal{L}^{N_{0}}_{i,\eps}(H_{\be A})(x) +  \sum_{y\in \sigma^{-N_{0}}(x),\,y\notin \Sigma_{i,\eps}}e^{\beta S_{N_{0}}(A)(y)}H_{\be A}(y).$$
	Integrating both sides with respect to the eigenmeasure $\nu_{\eps}$ of $\mathcal{L}_{i,\eps}$ we have\footnote{if $\S_{i,\eps}$ has several irreducible components, the computation can be done on each of these components.}
	$$\hskip -2cme^{N_{0.}P(\beta A)}\int H_{\beta A}(x)\,d\nu_{\eps}(x) \ge e^{-\be.N_{0}r(\eps)} \int \mathcal{L}^{N_{0}}_{i,\eps}(H_{\be A})(x)\,d\nu_{\eps}(x) +  \int\sum_{y\in \sigma^{-N_{0}}(x),\,y\notin \Sigma_{i,\eps}}e^{\beta S_{N_{0}}(A)(y)}H_{\be A}(y)\,d\nu_{\eps}(x)$$
	which yields 
	$$\hskip -2cm e^{N_{0}P(\beta A)}\int H_{\beta A}(x)\,d\nu_{\eps}(x) \ge e^{N_{0}h_{i,\eps}-\be.N_{0}r(\eps)}\int H_{\be A}(x)\,d\nu_{\eps}(x) +  \int\sum_{y\in \sigma^{-N_{0}}(x),\,y\notin \Sigma_{i,\eps}}e^{\beta S_{N_{0}}(A)(y)}H_{\be A}(y)\,d\nu_{\eps}(x).$$
	
If $z\notin B(\Om_{i},\eps_{0})$, then $z\notin \S_{i,\eps}$, and therefore we get

 \begin{equation}
\label{equja1-nono}
\hskip -2cme^{N_{0}P(\beta A)}\int H_{\beta A}(x)\,d\nu_{\eps}(x) \ge e^{N_{0}h_{i,\eps}-\be.N_{0}r(\eps)}\int H_{\be A}(x)\,d\nu_{\eps}(x) +  \int\sum_{y\in \sigma^{-N_{0}}(x),\,y\notin B(\Om_{i},\eps_{0})}e^{\beta S_{N_{0}}(A)(y)}H_{\be A}(y)\,d\nu_{\eps}(x).
\end{equation}

	Note that $h_{i,\eps}$ decreases as $\eps$  decreases. Every $\S_{i,\eps}$ contains $\Om_{i}$, hence $h_{i,\eps}\ge h_{i}$ for every $\eps$. On the other side, if $\mu_{\eps}$ is a measure of maximal entropy for $\S_{i,\eps}$, its support  is contained in $B(\Om_{i},\eps)$, thus any accumulation point has support in $\Om_{i}$ (see Lemma \ref{lem-accunueps}). Consequently, the limit for $h_{i,\eps}$ as $\eps$ goes to 0 is lower or equal to $h_{i}$. This yields $\disp h_{i,\eps}\to_{\eps\to0}h_{i}$.

Then, considering any accumulation point $\nu_{1/\8}$ for $\nu_{\eps}$ for the weak* topology, \eqref{equja1-nono} yields 
 \begin{equation}
\label{equja2-nono}
\hskip -2cme^{N_{0}P(\beta A)}\int H_{\beta A}(x)\,d\nu_{1/\8}(x) \ge e^{N_{0}h_{i}}\int H_{\be A}(x)\,d\nu_{1/\8}(x) +  \int\sum_{y\in \sigma^{-N_{0}}(x),\,y\notin B(\Om_{i},\eps_{0})}e^{\beta S_{N_{0}}(A)(y)}H_{\be A}(y)\,d\nu_{1/\8}(x).
\end{equation}

or equivalently, 
 \begin{equation}
\label{equja3-nono}
\hskip -2cm (e^{N_{0}(P(\beta A)-h_{i})}-1)e^{N_{0}h_{i}}\int H_{\beta A}(x)\,d\nu_{1/\8}(x) \ge  \int\sum_{y\in \sigma^{-N_{0}}(x),\,y\notin B(\Om_{i},\eps_{0})}e^{\beta S_{N_{0}}(A)(y)}H_{\be A}(y)d\nu_{1/\8}(x).
\end{equation}	

\subsubsection{Limit for the left hand side term at exponential scale}

If we do $\dfrac1\be\log$ on the left hand side in \eqref{equja3-nono}
 and then do $\be\to+\8$, then 
 \begin{itemize}
\item $\dfrac1\beta\log (e^{N_{0}(P(\beta A)-h_{i})}-1)$ goes to $\ga$ by assumption on ``$\be\to+\8$'',
\item $\dfrac1\beta\log (e^{N_{0}h_{i}})$ goes to 0 and 
\item $\disp\dfrac1\beta\log \left(\int H_{\beta A}(x)\,d\nu_{1/\8}(x) \right)$ goes to $V(\Om_{i})$ since $\dfrac1\be\log H_{\be} $ uniformly (in $X$) goes to $V$ and $\nu_{1/\8}$ has support in $\Om_{i}$. 
\end{itemize}

%

\subsubsection{Limit for the right hand side term at exponential scale in the case $j\neq i$}\label{sssec-jneqi}
Inequality \eqref{equja3-nono} holds for any choice of $\eps_{0}$. We can thus choose $\eps_{0}$ such that $d(\Om_{j},\Om_{i})$ is larger than $10.\eps_{0}$. 

Let $x$ be any point in $\Om_{i}$. 
 Lemma \ref{lem-laplace} yields\footnote{We emphasize we use a weaker version of the Lemma, with a constant point $x$.} 
\begin{equation}
\label{eq-calcul1-1}
\lim_{\be\to+\8}\frac1\be\log\left(\sum_{y\in \sigma^{-N_{0}}(x),\,y\notin B(\Om_{i},\eps_{0})}e^{\beta S_{N_{0}}(A)(y)}H_{\be A}(y)\right)=\max_{y,\ \s^{N_{0}}(y)=x,\, y\notin B(\Om_{i},\eps_{0})}\{S_{N_{0}}(A)(y)+V(y)\}.
\end{equation}

Let us pick such a $y$. By definition, $y\notin \Om_{i}$ but $\s^{N_{0}}(y)=x$ belongs to $\Om_{i}$. Therefore one can consider the smallest $1\le k_{y}\le N_{0}$ such that $x':=\s^{k_{y}}(y)$ belongs to $\Om_{i}$. We denote this operation by $y\rightsquigarrow x'$.

\begin{remark}
\label{rem-ky}
In the following we shall always use $k_{y}$ but we let the reader check that $k_{y}$ actually only depends on $x'$.
$\blacksquare$\end{remark}

We get 
$$S_{N_{0}}(A)(z)=S_{k}(A)(y)+S_{N_{0}-k_{y}}(A)(x')=S_{k_{y}}(A)(y),$$
since $A\equiv 0$ on $\Om_{i}$.  
Furthermore Lemma \ref{lem-mane} yields  $V(y)\ge V(\Om_{j})+S(\Om_{j},y),$ which yields 
\begin{equation}
\label{eq-minosydney1}
S_{N_{0}}(A)(y)+V(y)\ge V(\Om_{j})+S(\Om_{j},y)+S_{k_{y}}(A)(y),
\end{equation}
and $\s^{k_{y}}(y)=x'\in \Om_{i}$.

If $z$ is a preimage of $x'$ and $z\notin \Om_{i}$, then $z$ is also a preimage of $x$ and any preimage of $z$ of order $k_{y}-1$ is a preimage of $x$ of  order $N_{0}$ and is outside $\Om_{i}$. 
By construction, if $y'$ is one of these preimages, we also get $y'\rightsquigarrow x'$.

We also remind the immediate inequality:
\begin{equation}
\label{equ-minosydneypm1}
\max_{y,\ \s^{N_{0}}(y)=x,\ y\notin\Om_{i}}\left\{S(\Om_{j},y)+S_{N_{0}}(y)\right\}\geq \max\left\{S(\Om_{j},y)+S_{k_{y}}(A)(y), \ y\rightsquigarrow x', \s^{k_{y}-1}(y)\notin \Om_{i}\right\}.
\end{equation}
Note that $y\rightsquigarrow x'$ and $\s^{k_{y}-1}(y)\notin \Om_{i}$ exactly means $\s^{N_{0}}(y)=x$, $\s^{k_{y}}(y)=x'$ and $\s^{k_{y}-1}(y')\notin \Om_{i}$. 
Therefore \eqref{equ-minosydneypm1} is equivalent to

\begin{eqnarray}
\max_{y,\ \s^{N_{0}}(y)=x,\ y\notin\Om_{i}}\left\{S(\Om_{j},y)+S_{N_{0}}(y)\right\}\geq&&\nonumber\\
 \max\left\{S(\Om_{j},y)+S_{k_{y}-1}(A)(y)+A(z), \ \s(z)=x',z\notin \Om_{i}, y\in \s^{-k_{y}+1}(z)\right\}.&&\label{equ-minosydneypm2}
\end{eqnarray}

Now, let us pick some $z$ satisfying $z\notin\Om_{i}$ and $\s(z)=x'$. 
We consider a sequence $(\xi^{n})$ of points such as in Lemma \ref{lem-bonnesuite} such that $S(\Om_{j},z)=\lim_{\ninf} S_{k_{n}}(A)(\xi^{n})$ holds. 
As $\s^{-k_{y}+1}(\{z\})$ is finite, infinitely manies $\xi^{n}$ satisfy $\s^{k_{n}-k_{y}}(\xi^{n})=y$ where $y$ is some preimage of order $k_{y}-1$ of $z$. 
Furthermore, for such $\xi^{n}$, Lemma \ref{lem-techmane} yields\footnote{Note that $k_{n}\to+\8$ since $x\notin\Om_{j}$ and $\xi^{n}$ converges to some point in $\Om_{j}$.}
$$S(\Om_{j},y)\geq \limsup_{\ninf}S_{k_{n}-k_{y}}(A)(\xi^{n}).$$
This yields that $ \max\left\{S(\Om_{j},y)+S_{k_{y}-1}(A)(y)+A(z), \ \s(z)=x',z\notin \Om_{i}, y\in \s^{-k_{y}+1}(z)\right\}$ is greater or equal to 
$ \max\left\{S(\Om_{j},z))+A(z), \ \s(z)=x',z\notin \Om_{i}, \right\}$ which is greater or equal to $S^{ext}(j,i)$.

This actually yields
$$\ga+V(\Om_{i})\ge S^{ext}(j,i)+V(\Om_{i}).$$

\subsubsection{Proof for the case $j=i$}
For the case $j\neq i$, one of the key points in the computation  of $S^{ext}(j,i)$ was that relevant backward orbits have to be uniformly far from $\Om_{i}$ (that is at distance greater than $\eps_{0}$). This does not hold anymore for the case $j=i$. However, relevant backward orbits have to get out $\Om_{i}$, hence to be at positive distance of $\Om_{i}$. This is the fact we want to use to get the proof of Proposition \ref{prop-minoprincipale}.

\bigskip
We first emphasize a by-product result from \ref{sssec-jneqi}:
\begin{claim}
 \label{claimbypro}
 Any $x\in\Om_{i}$ admits a preimage $x'\in \Om_{i}$ of order less than $N_{0}-1$ such that $x'$ admits preimages of order 1 outside $\Om_{i}$.
\end{claim}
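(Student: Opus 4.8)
\textbf{Proof plan for Claim \ref{claimbypro}.}
The plan is to extract exactly what was established in the course of the $j\neq i$ argument in \S\ref{sssec-jneqi}, stripping away the role of $\Om_j$. Fix $x\in\Om_i$. By Lemma \ref{lem-antecompo2}, applied with $\eps_0$ as chosen so that $\S_{i,\eps_0}\supseteq\Om_i$, there is $N_0$ and a point $y\in\s^{-N_0}(x)$ with $y\notin\S_{i,\eps_0}$; in particular $y\notin\Om_i$ while $\s^{N_0}(y)=x\in\Om_i$. So the set of $N_0$-preimages of $x$ lying outside $\Om_i$ is non-empty. Pick any such $y$.

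Next I would run the ``$\rightsquigarrow$'' construction: since $y\notin\Om_i$ but $\s^{N_0}(y)\in\Om_i$, let $k_y$ be the smallest integer $1\le k_y\le N_0$ with $x':=\s^{k_y}(y)\in\Om_i$. Then $x'\in\Om_i$ is a preimage of $x$ of order $N_0-k_y\le N_0-1$. Moreover, set $z:=\s^{k_y-1}(y)$. By minimality of $k_y$ we have $z\notin\Om_i$ (if $k_y=1$ this is just $z=y\notin\Om_i$, and if $k_y>1$ then $z=\s^{k_y-1}(y)$ is, by definition of $k_y$ as the \emph{smallest} such index, not yet in $\Om_i$), while $\s(z)=\s^{k_y}(y)=x'$. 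Thus $x'$ is a preimage of $x$ of order at most $N_0-1$, and $x'$ admits a preimage of order $1$, namely $z$, lying outside $\Om_i$. This is exactly the assertion of the claim.

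There is essentially no obstacle here: the entire content has already been produced in \S\ref{sssec-jneqi}, where the map $y\rightsquigarrow x'$ and the point $z$ with $\s(z)=x'$, $z\notin\Om_i$ were introduced and their existence justified via Lemma \ref{lem-antecompo2}. The only thing worth double-checking is that the order bound is $N_0-1$ and not $N_0$: since $y\notin\Om_i$ forces $k_y\ge 1$, one has $N_0-k_y\le N_0-1$, so $x'$ genuinely sits strictly closer to $x$ than $N_0$ steps — and if $x$ itself already has a preimage of order $1$ outside $\Om_i$ one may of course take $x'=x$. Hence the claim follows immediately, and it will be reused in the $j=i$ case to guarantee that the relevant backward orbits leaving $\Om_i$ exist and can be taken at a controlled (bounded) number of steps.
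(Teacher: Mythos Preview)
Your proof is correct and is exactly the argument the paper has in mind: the claim is stated there merely as a ``by-product'' of \S\ref{sssec-jneqi} with no separate proof, and you have faithfully unpacked that by-product via Lemma~\ref{lem-antecompo2} and the $y\rightsquigarrow x'$ construction. Your check that $k_y\ge 1$ gives order $N_0-k_y\le N_0-1$ is the right one (the phrase ``less than $N_0-1$'' in the claim should be read as ``at most $N_0-1$'', consistently with the use $k\le N_0-1$ just below).
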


We can thus consider the set $\Om_{i,k,\eps}$ of points $x\in \Om_{i}$ such that:
\begin{enumerate}
\item $\exists x'\in \Om_{i},\ \s^{k}(x')=x,\ k\le N_{0}-1$,
\item for all these $x'$'s, $\exists z\in\s^{-1}(x'), \ z\notin \Om_{i}$, 
\item for all these $z$'s, $d(z,\Om_{i})>\eps$. 
\end{enumerate}
Letting $\eps\downarrow 0$, one can find $\eps_{1}$ and $k=N_{1}-1\le N_{0}-1$ such that $\Om_{i,k,\eps}$ has positive $\nu_{1/\8}$ measure.

\bigskip
Then, one can redo what has been done above but replacing  $(\eps_{0},N_{0})$ with $(\eps_{1},N_{1})$. This yields
\begin{equation}
\label{equja4-nono}
\hskip -2cm (e^{N_{1}(P(\beta A)-h_{i})}-1)e^{N_{1}h_{i}}\int H_{\beta A}(x)\,d\nu_{1/\8}(x) \ge  \int\sum_{y\in \sigma^{-N_{1}}(x),\,y\notin B(\Om_{i},\eps_{1})}e^{\beta S_{N_{1}}(A)(y)}H_{\be A}(y)d\nu_{1/\8}(x).
\end{equation}	

The result for the left hand holds as previously. We thus just need to study the right hand side term. 
Note that we get 
$$\hskip -2cm\int\sum_{y\in \sigma^{-N_{1}}(x),\,y\notin B(\Om_{i},\eps_{1})}e^{\beta S_{N_{1}}(A)(y)}H_{\be A}(y)d\nu_{1/\8}(x)\ge \int_{\Om_{i,N_{1}-1,\eps_{1}}}\sum_{y\in \sigma^{-N_{1}}(x),\,y\notin B(\Om_{i},\eps_{1})}e^{\beta S_{N_{1}}(A)(y)}H_{\be A}(y)d\nu_{1/\8}(x)$$
and this last term can we rewritten as 
$$\int_{\Om_{i,N_{1}-1,\eps_{1}}}\sum_{z}e^{\beta A(z)}H_{\be A}(z)d\nu_{1/\8}(x),$$
where we reuse notations from the set $\Om_{i,N_{1}-1,\eps_{1}}$. 
We emphasize that for these $z$'s $S_{N_{1}}(A)(z)=A(z)$ since $\s(z)=x'\in \Om_{i}$. By construction they satisfy $d(z',\Om_{i})>\eps_{1}$. 
For all of these $x$, 
$$\dfrac1\be\log\left(\sum_{z}e^{\beta A(z)}H_{\be A}(z)\right)\to_{\be\to+\8}\max_{z}\{A(z)+V(z)\},$$
and $V(z)\ge V(\Om_{i})+S(\Om_{i},z)$. This yields, 
$$\lim_{\be\to+\8}\dfrac1\be\log\left(\sum_{z}e^{\beta A(z)}H_{\be A}(z)\right)\ge \max_{z}\{S(\Om_{i},z)+A(z)\}+V(\Om_{i}).$$

As $\Om_{i,N_{1}-1,\eps_{1}}$ has positive $\nu_{1/\8}$-measure, 
\begin{eqnarray*}
\liminf_{\be\to+\8}\dfrac1\be\log\left(\int_{\Om_{i,N_{1}-1,\eps_{1}}}\sum_{z}e^{\beta A(z)}H_{\be A}(z)d\nu_{1/\8}\right)\ge \inf_{x\in \Om_{i,N_{1}-1},\eps_{1}}\max\{S(\Om_{i},z)+A(z),\ \s(z)=x'\}\\
+V(\Om_{i}).
\end{eqnarray*}
Now, $ \inf_{x\in \Om_{i,N_{1}-1,\eps_{1}}}\ldots $ is bigger than $ \inf_{x\in \Om_{i}}\ldots$, and the result holds.

\subsection{End of the proof of Theorem \ref{theoA}}

Let us pick $n$ different irreducible components with maximal entropy $h$, $\Om_{i_{j}}$, $1\le j\le n$, with $i_{j}\neq i_{j'}$ if $j\neq j'$. 
Proposition \ref{prop-minoprincipale} yields
\begin{eqnarray*}
V(\Om_{i_{1}})+\ga&\ge & S^{ext}(i_{2},i_{1})+V(\Om_{i_{2}}),\\
V(\Om_{i_{2}})+\ga&\ge & S^{ext}(i_{3},i_{2})+V(\Om_{i_{3}}),\\
&\vdots&\\
V(\Om_{i_{n}})+\ga&\ge & S^{ext}(i_{1},i_{n})+V(\Om_{i_{1}}).\\
\end{eqnarray*}
Doing the sum, we get 
$$\ga\ge \dfrac{S^{ext}(i_{2},i_{1})+S^{ext}(i_{3},i_{2})+\ldots+S^{ext}(i_{1},i_{n})}n.$$
This holds for any finite sub-family obtained from $\{i_{1},\ldots i_{n}\}$, which exactly means that $\ga$ is bigger than the eigenvalue for the matrix whose entries are the $S^{ext}({i_{k},i_{l}})$'s. 

This holds for any accumulation point $\ga$ for $\dfrac1\be\log(P(\be)-h)$, which concludes the proof of Theorem \ref{theoA}. 

\bigskip
Note that we immediatly get:

\begin{maintheorem}\label{theoB}
With the same assumptions, if $\Om$ has only one irreducible component with maximal entropy, $\Om'$, then
$$\liminf_{\be\to+\8}\dfrac1\be\log(P(\be)-h)\geq S^{ext}(\Om',\Om').$$
\end{maintheorem}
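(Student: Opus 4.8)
\textbf{Proof proposal for Theorem \ref{theoB}.}

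The plan is to observe that Theorem \ref{theoB} is the degenerate case of the final inequality in the proof of Theorem \ref{theoA}, specialized to families of size $n=1$. First I would note that if $\Om$ has exactly one irreducible component $\Om'$ with maximal entropy, then the only admissible choice of ``$n$ different irreducible components with maximal entropy'' is the single-component family $n=1$, $\Om_{i_1}=\Om'$. Applying Proposition \ref{prop-minoprincipale} with $\Om_i=\Om_j=\Om'$ gives directly
$$\ga+V(\Om')\ge S^{ext}(\Om',\Om')+V(\Om'),$$
hence $\ga\ge S^{ext}(\Om',\Om')$, where $\ga$ is the chosen accumulation point of $\tfrac1\be\log(P(\be)-h)$. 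Since this holds for every such accumulation point, passing to the $\liminf$ yields the claim.

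The only subtlety, and the step I expect to require the most care, is that Proposition \ref{prop-minoprincipale} in the case $j=i$ genuinely needs the diagonal argument of Section 4 (the case $j=i$ subsection): one cannot keep the relevant backward orbits uniformly away from $\Om'$, so one works instead with the sets $\Om_{i,k,\eps}$ of points in $\Om'$ whose $N_1$-preimages escape $\Om'$ by a definite amount $\eps_1$, chosen so that $\Om_{i,N_1-1,\eps_1}$ carries positive $\nu_{1/\8}$-mass. That machinery, however, is already established in the proof of Proposition \ref{prop-minoprincipale}, so here I would simply invoke it: the ``immediate'' in the paper's phrasing ``we immediatly get'' refers precisely to the fact that no new estimate is needed — one re-reads the chain of inequalities at the end of the proof of Theorem \ref{theoA} in the trivial case where the cyclic family has length one, and the $V$-terms cancel on the two sides.

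Concretely I would write: \emph{By Proposition \ref{prop-minoprincipale} applied with $\Om_j=\Om_i=\Om'$ we have $\ga+V(\Om')\ge S^{ext}(\Om',\Om')+V(\Om')$, i.e. $\ga\ge S^{ext}(\Om',\Om')$. Since $\ga$ was an arbitrary accumulation point of $\frac1\be\log(P(\be)-h)$ as $\be\to+\8$, we conclude $\liminf_{\be\to+\8}\frac1\be\log(P(\be)-h)\ge S^{ext}(\Om',\Om')$.} No genuine obstacle remains beyond Proposition \ref{prop-minoprincipale} itself; the content of Theorem \ref{theoB} is entirely the observation that when there is a unique maximal-entropy component the max-plus eigenvalue over cycles of maximal-entropy components reduces to the single diagonal entry $S^{ext}(\Om',\Om')$, which by Lemma \ref{lem-siinul} is strictly negative (so the bound is non-trivial).
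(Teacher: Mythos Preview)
Your proposal is correct and matches the paper's approach exactly: the paper derives Theorem \ref{theoB} as an immediate consequence (``we immediatly get'') of the chain of inequalities from Proposition \ref{prop-minoprincipale}, specialized to the single-component cycle $n=1$, $\Om_{i_1}=\Om'$, so that the max-plus eigenvalue reduces to the lone diagonal entry $S^{ext}(\Om',\Om')$. Your remark that the $j=i$ case of Proposition \ref{prop-minoprincipale} is the only substantive ingredient, and that it is already established, is precisely the point.
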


Note that by Lemma \ref{lem-siinul}, $S^{ext}(\Om',\Om')$ is a negative real number.

\end{document}